\documentclass[a4paper,11pt]{amsart}
\setlength{\oddsidemargin}{0pt}
\setlength{\evensidemargin}{0pt}
\setlength{\textwidth}{470pt}
\setlength{\textheight}{640pt}
\setlength{\marginparwidth}{0pt}
\topmargin=0cm
\title[A free boundary problem of a nonlinear diffusion equation]
{Refined estimates for the propagation speed of the transition solution to a free boundary problem with a nonlinearity of combustion type}
\author[C. Lei, H. Matsuzawa, R. Peng and M. Zhou%Y. Kaneko and
]{%Yuki Kaneko$^\dag$ and
Chengxia Lei${}^\dag$, Hiroshi Matsuzawa${}^\S$, Rui Peng${}^\ddag$ and Maolin Zhou${}^\natural$}
%\thanks{{\it Date}:\today}

\thanks{2010 Mathematics Subject Classification. 35K20, 35K55, 35R35}
\thanks{{\it Key words and phrases.} free boundary problem, nonlinear diffusion equation, combustion, propagation speed}
\thanks{${}^\dag$School of Mathematics and Statistics, Jiangsu Normal University, Xuzhou, 221116, Jiangsu Province, China. (Email: {\tt leichengxia001@163.com})}
\thanks{${}^\S$National Institute of Technology, Numazu College, 3600 Ooka, Numazu City, Shizuoka 410-8501, Japan. (Email: {\tt hmatsu@numazu-ct.ac.jp})}
\thanks{${}^\ddag$School of Mathematics and Statistics, Jiangsu Normal University, Xuzhou, 221116, Jiangsu Province, China. (Email: {\tt pengrui\,$\b{}$\,seu@163.com})}
\thanks{${}^\natural$School of Science and Technology, University of New England, Armidale, NSW, 2351, Australia. (Email: {\tt mzhou6@une.edu.au})}
\thanks{H. Matsuzawa was partially supported by Grant-in-Aid for Scientific Research (C) 17K05340, and R. Peng was partially supported by NSF of China (No. 11671175, 11571200), the Priority Academic Program Development of Jiangsu Higher Education Institutions,
Top-notch Academic Programs Project of Jiangsu Higher Education Institutions (No. PPZY2015A013) and Qing Lan Project of Jiangsu Province.}
\date{\today}
\usepackage{amsmath, enumerate, color, mathrsfs, cases}

\makeatletter

\@addtoreset{equation}{section}
\makeatother
\theoremstyle{definition}

\theoremstyle{plain}
\newtheorem{thm}{Theorem}[section]
\newtheorem{theorem}{Theorem}
\newtheorem{proposition}{Proposition}[section]

\newtheorem{lemma}[proposition]{Lemma}

\theoremstyle{definition}
\newtheorem{remark}{Remark}
%%%%%%%%%%%%%%%%%%%%%%%%%%%%%%%%%%%%%%%%%%%%%%%%%%%%%%%%%%%%%%%%%%%%%

\begin{document}

\begin{abstract}
We are concerned with the nonlinear problem $u_t=u_{xx}+f(u)$, where $f$ is of combustion type,
coupled with the Stefan-type free boundary $h(t)$. According to \cite{DL,DLZ},
for some critical initial data, the transition solution $u$ locally uniformly converges to $\theta$,
which is the ignition temperature of $f$, and the free boundary satisfies  $h(t)=C\sqrt{t}+o(1)\sqrt{t}$
for some positive constant $C$ and all large time $t$.
In this paper, making use of two different approaches, we establish more accurate upper and lower bound estimates on
$h(t)$ for the transition solution, which suggest that the nonlinearity $f$ can essentially influence the propagation speed.
\end{abstract}

\maketitle

\section{Introduction and Main Results}

In this paper, we consider the following free boundary problem of nonlinear diffusion equations:
\begin{align}\label{problem}
\left\{
\begin{array}{ll}
u_t=u_{xx}+f(u),& t>0,\ g(t)<x<h(t), \\
u(t,g(t))=u(t,h(t))=0, & t>0, \\
g'(t)=-\mu u_x(t, g(t)), & t>0, \\
h'(t)=-\mu u_x(t, h(t)), & t>0, \\
-g(0)=h(0)=h_0, u(0,x)=u_0(x), & -h_0\le x\le h_0,
\end{array}\right.
\end{align}
where $x=g(t)$ and $x=h(t)$ are the expanding fronts, $\mu$, $h_0$ are given positive constants.
The nonlinear reaction term $f$ is a locally Lipschitz continuous function satisfying
\begin{align}\label{f1}
f(0)=0,\ \ \ f(u)<0\ \ {\rm for}\ u>1.
\end{align}
Throughout the paper, unless otherwise specified, we assume that $f$ is of combustion type:
\begin{align}\label{combustion}
\begin{split}
&f(u)=0\ \ {\rm in}\ [0,\theta],\ \ \ f(u)>0\ \ {\rm in}\ (\theta, 1), \\
&f(u)<0\ \ {\rm in}\ (1,\infty)
\end{split}
\end{align}
for some constant $\theta\in(0,1)$.

For any given $h_0>0$, the initial function $u_0(x)$ satisfies
\begin{align}\label{u0}
u_0\in\mathscr{X}(h_0):=\left\{\phi\in C^2[-h_0, h_0]:\begin{tabular}{ll}
$\phi(-h_0)=\phi(h_0)=0,\ \phi'(-h_0)>0,$ \\
$\phi'(h_0)<0,\ \phi(x)>0\ {\rm in}\ (-h_0,h_0)$. \end{tabular}\right\}
\end{align}
It was shown by \cite{DL} that under condition \eqref{f1}, \eqref{problem} has a unique globally defined classical solution
$(u(t,x), h(t), g(t))$. In addition, $g'(t)<0$ and $h'(t)>0$ for $t>0$ and therefore
\begin{align*}
g_{\infty}:=\lim_{t\to\infty}g(t)\in [-\infty, -h_0),\ \ h_{\infty}:=\lim_{t\to\infty}h(t)\in(h_0, +\infty]
\end{align*}
always exist.

Denote $\mathbb{R}=(-\infty,\infty)$. In \cite{DL}, the authors obtained the following trichotomy result
and the sharp threshold dynamics when $f$ satisfies \eqref{combustion}.

\begin{theorem}[\mbox{\cite[Theorem 1.4]{DL}}]\label{theoremDL}
Assume that $f$ is of combustion type, and $h_0>0$, $u_0\in\mathscr{X}(h_0)$. Then one of the following situations occurs:
\begin{enumerate}[\rm (i)]
\item {\bf Spreading:} $(g_{\infty}, h_{\infty})=\mathbb{R}$ and
\begin{align*}
\lim_{t\to\infty}u(t,x)=1\ \ \mbox{\it locally\ uniformly\ in}\ \mathbb{R};
\end{align*}
\item {\bf Vanishing:} $(g_{\infty}, h_{\infty})$ is a finite interval and
\begin{align*}
\lim_{t\to\infty}\max_{x\in[g(t), h(t)]}u(t,x)=0;
\end{align*}
\item {\bf Transition:} $(g_{\infty}, h_{\infty})=\mathbb{R}$ and
\begin{align*}
\lim_{t\to\infty}u(t,x)=\theta\ \ \mbox{\it locally\ uniformly\ in}\ \mathbb{R}.
\end{align*}
\end{enumerate}
Moreover, if $u_0=\nu\phi$ ($\nu>0$) for some $\phi\in\mathscr{X}(h_0)$, then there exists $\nu^*=\nu^*(h_0,\phi)\in(0, \infty]$ such that
 vanishing happens when $0<\nu<\nu^*$, spreading happens when $\nu>\nu^*$, and transition happens when $\nu=\nu^*$.
\end{theorem}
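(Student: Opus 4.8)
The plan is to combine the comparison principle for the free boundary problem (monotonicity of $(u,h,-g)$ with respect to $u_0$ and to $h_0$, together with continuous dependence of the solution on $\nu$ on finite time intervals, in $C^{1}_{\mathrm{loc}}$), a pair of ``trapping'' lemmas that pin down exactly when vanishing occurs, and the structure of $f$ on the interval $(\theta,1)$. First I would record the \emph{Stefan identity} $\frac{d}{dt}\bigl[\tfrac1\mu(h(t)-g(t))+\int_{g(t)}^{h(t)}u\,dx\bigr]=\int_{g(t)}^{h(t)}f(u)\,dx$. From it, if $\max_{[g(T),h(T)]}u(T,\cdot)<\theta$ for some $T$, then $f(u)\equiv0$ for $t\ge T$, the right side vanishes, $h-g$ stays bounded, and the pure one-phase Stefan problem $u_t=u_{xx}$ on an eventually bounded expanding domain forces $u\to0$: \emph{vanishing} occurs. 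Next, if $h_\infty<\infty$ then $\int_0^\infty|u_x(t,h(t))|\,dt=\tfrac1\mu(h_\infty-h_0)<\infty$; inserting a barrier of the form $C(h_\infty-x)$ near the right front, plus an energy estimate on the bounded domain, gives $u\to0$, hence again vanishing (and symmetrically for $g_\infty>-\infty$). Combining these: \emph{either vanishing occurs, or $(g_\infty,h_\infty)=\mathbb R$ and $\max_x u(t,\cdot)\ge\theta$ for all $t>0$.} This already isolates alternative (ii).

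The heart of the matter is to show that a non-vanishing solution, for which $(g_\infty,h_\infty)=\mathbb R$, does exactly one of two things. Since $f>0$ on $(\theta,1)$, combustion behaves monostably there, and I would establish a \emph{combustion hair-trigger}: there exist $\delta_0>0$, $L_0>0$ such that if $u(T,\cdot)\ge\theta+\delta_0$ on some interval of length $L_0$ contained in $(g(T),h(T))$, then $u\to1$ locally uniformly in $\mathbb R$ — proved by inserting into the equation a compactly supported, slowly expanding subsolution whose core exceeds $\theta$ (built from an appropriate ground state) and comparing the free boundaries so that $h(t)$ and $-g(t)$ stay ahead of its support; the matching upper bound $\limsup_{t\to\infty}\sup_x u\le1$ follows from the ODE comparison $\dot y=f(y)$ (using $f(1)=0$ and $f<0$ on $(1,\infty)$), so \emph{spreading} holds in this case. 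If no such time $T$ exists, then the super-level sets $\{x:u(t,x)\ge\theta+\delta\}$ contain no long interval for any $t$; feeding this into a zero-number argument for $u(t,\cdot)-\theta$ (whose number of sign changes is finite and nonincreasing) together with parabolic estimates, every locally uniform limit of time-translates $u(t+t_n,\cdot)$ is a bounded entire solution of $v_t=v_{xx}+f(v)$ on $\mathbb R$ with values in $[0,\theta+\delta_0)$ and $\sup_x v\ge\theta$, and a Liouville-type step (using the flatness of $f$ on $[0,\theta]$ and its positivity on $(\theta,1)$) shows the only such solution is $v\equiv\theta$, whence $u\to\theta$ locally uniformly — \emph{transition}. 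I expect this dichotomy — excluding that the solution hovers at an intermediate level strictly between $\theta$ and $1$, which rests on that entire-solution classification — to be the main obstacle.

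For the sharp threshold, take $u_0=\nu\phi$; by comparison $(u_\nu,h_\nu,-g_\nu)$ is strictly increasing in $\nu$. Set $\nu^*:=\sup\{\nu>0:\text{vanishing occurs for }\nu\phi\}$, which is positive because small $\nu$ makes $\|u_0\|_\infty<\theta$ and triggers the first trapping lemma, and may be $+\infty$. From the trapping lemma and continuous dependence, vanishing is equivalent to the \emph{open} condition ``$\exists T:\max u(T,\cdot)<\theta$'' and is monotone in $\nu$, so the vanishing set is exactly $(0,\nu^*)$ and $\nu^*$ is not itself a vanishing value; likewise, by the hair-trigger, spreading is equivalent to the \emph{open} condition ``$\exists T:\ u(T,\cdot)\ge\theta+\delta_0$ on an interval of length $L_0$'' and is monotone in $\nu$, so the spreading set is $(\nu^{**},\infty)$ for some $\nu^{**}\ge\nu^*$. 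At $\nu=\nu^*$ the solution is non-vanishing but cannot be spreading (otherwise, by openness, slightly smaller values of $\nu$ would spread, contradicting that they vanish), so by the dichotomy above $\nu=\nu^*$ is a transition value. It then remains to rule out $\nu^*<\nu^{**}$, i.e.\ a whole interval of transition values: if $\nu^*\le\nu_1<\nu_2$ were both transition values, then $w:=u_{\nu_2}-u_{\nu_1}>0$ solves a linear parabolic equation on the expanding domain with bounded coefficient, $w(1,\cdot)$ is bounded below by a positive constant on a fixed interval, and $u_{\nu_1}$, being non-vanishing, keeps $\max_x u_{\nu_1}(t,\cdot)\ge\theta$ for all $t$; exploiting strict monotonicity to amplify the excess of $u_{\nu_2}$ over $\theta$ to level $\theta+\delta_0$ on an interval of length $L_0$ and then invoking the hair-trigger forces $u_{\nu_2}$ to spread, contradicting that $\nu_2$ is a transition value. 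Hence $\nu^{**}=\nu^*$, so vanishing, transition and spreading occur precisely for $\nu<\nu^*$, $\nu=\nu^*$, $\nu>\nu^*$ (with no spreading or transition at all when $\nu^*=\infty$). This last non-degeneracy step, like the dichotomy in the previous paragraph, is the delicate part of the argument; the purely free boundary ingredients (existence, comparison, the Stefan identity) are comparatively routine.
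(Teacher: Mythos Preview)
The paper does not prove this theorem: it is quoted verbatim from \cite[Theorem~1.4]{DL} (Du--Lou) and used as background, with only a remark that the $C^1$ hypothesis on $f$ can be weakened to locally Lipschitz via \cite{DM}. There is therefore no in-paper proof to compare your proposal against; any substantive comparison would have to be made with the original Du--Lou paper, not with the present one.

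That said, your outline is broadly in the spirit of the Du--Lou argument: the monotonicity/comparison structure, the characterisation of vanishing via boundedness of $h_\infty-g_\infty$, a ``hair-trigger'' mechanism for spreading, and the use of zero-number/entire-solution classification to force convergence to $\theta$ in the borderline case are indeed the main ingredients there. Two points deserve care if you intend to flesh this out. First, your ``open condition'' for spreading and the elimination of a nontrivial interval $(\nu^*,\nu^{**})$ of transition values rely on a quantitative hair-trigger with fixed $\delta_0,L_0$ together with a mechanism that amplifies a strictly positive gap $u_{\nu_2}-u_{\nu_1}$ into an excess of size $\delta_0$ over a length-$L_0$ interval; this amplification is not automatic on an unbounded, time-dependent domain and in Du--Lou is handled by a more careful argument using the structure of the limiting profiles rather than a generic linear-parabolic lower bound. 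Second, the Liouville step you invoke---classifying bounded entire solutions with values in $[0,\theta+\delta_0)$ and $\sup v\ge\theta$ as $v\equiv\theta$---is the genuine crux, and in Du--Lou (and in \cite{DLZ}) it is obtained through a zero-number analysis against the one-parameter family of stationary solutions $V_b$ rather than a direct Liouville theorem; your sketch correctly flags this as the main obstacle but does not yet indicate the tool that actually closes it.
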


\begin{remark}In \cite{DL}, it was assumed that $f$ is $C^1$; in view of the argument of \cite{DM}, one can easily see that $f$
can be relaxed to be locally Lipschitz and Theorem \ref{theoremDL} remains true.
\end{remark}

%We call $q^*$ a semi-wave with speed $c^*$, since the function $v(t,x)=q^*(c^*t-x)$ satisfies
%\begin{align*}
%\left\{
%\begin{array}{l}
%v_t=v_{xx}+f(v),\ t\in\mathbb{R}^1,\ x<c^*t, \\
%v(t,c^*t)=0,\ v(t,-\infty)=1
%\end{array}\right.
%\end{align*}
%and it resembles a wave moving to the right at constant speed $c^*$ with satisfying the Stefan condition at the front $x=c^*t$.
%
In the paper \cite{DMZ}, the authors derived the following propagation speed when spreading happens.

\begin{theorem}[\mbox{\cite[Theorem 1.2]{DMZ}}]
Assume that $f$ is of combustion type with $f\in C^1$ and $f'(1)<0$. If spreading happens, then there exist $H_{\infty}$, $G_{\infty}\in\mathbb{R}$ such that
\begin{align*}
&\lim_{t\to\infty}(h(t)-c^*t-H_{\infty})=0,\ \ \lim_{t\to\infty}h'(t)=c^*, \\
&\lim_{t\to\infty}(g(t)+c^*t-G_{\infty})=0,\ \ \lim_{t\to\infty}g'(t)=-c^*,
\end{align*}
where $c^*$ is uniquely determined by the following equation
\begin{align*}
\left\{\begin{array}{l}
q_{zz}-c^*q_z+f(q)=0\ \ {\rm for}\ z\in(0,\infty), \\
q(0)=0,\ \mu q(0)=c^*,\ q(\infty)=1,\ q(z)>0\ \ {\rm for}\ z\in(0,\infty).
\end{array}\right.
\end{align*}
\end{theorem}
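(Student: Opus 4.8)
The plan is to proceed in three stages: (i) construct the semi-wave $(c^*,q^{c^*})$; (ii) show the right front lies between $c^*t+O(1)$ from above and $(c^*-\varepsilon)t+O(1)$ from below for every $\varepsilon>0$, and deduce $h'(t)\to c^*$; (iii) upgrade this to $h(t)-c^*t\to H_\infty$. Since $x\mapsto-x$ turns \eqref{problem} into the same problem with $u_0(-\cdot)$ in place of $u_0$ and with $h$, $g$ interchanged, it is enough to treat the right front throughout.

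\emph{Stage (i).} For each $c>0$ I would study $q''-cq'+f(q)=0$ in the phase plane: the rest point $(1,0)$ has a one-dimensional unstable manifold, which exists and is approached exponentially precisely because $f'(1)<0$; following it backward, $q$ increases and first reaches $0$ at a finite $z$ (finiteness since $f\equiv0$ on $[0,\theta]$, where the equation reduces to $q''=cq'$), and normalizing this to $z=0$ fixes a unique profile $q^c$ with $q^c(0)=0$, $q^c{}'>0$ on $(0,\infty)$, $q^c(\infty)=1$, and $1-q^c$ exponentially small at $+\infty$. Multiplying by $q^c{}'$ and integrating over $(0,\infty)$ gives $\tfrac12\bigl(q^c{}'(0)\bigr)^2=\int_0^1 f-c\int_0^\infty(q^c{}')^2$; this together with a standard comparison of orbits shows $c\mapsto q^c{}'(0)$ is continuous, strictly decreasing and bounded, equal to $\sqrt{2\int_0^1 f}$ at $c=0^+$, so $\mu q^c{}'(0)=c$ has a unique root $c^*=c^*(\mu)>0$, which with $q^{c^*}$ is the semi-wave of the theorem.

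\emph{Stage (ii).} Since $q^{c^*}$ solves the PDE exactly, $\bar u(t,x):=q^{c^*}(c^*t+M-x)$ is an exact solution whose front $\bar h(t)=c^*t+M$ satisfies $\bar h'(t)=c^*=\mu q^{c^*}{}'(0)=-\mu\bar u_x(t,\bar h(t))$; combining the a priori bound $\limsup_{t\to\infty}\max_x u(t,x)\le1$ (from \eqref{f1}) with the comparison principle of \cite{DL} gives $h(t)\le c^*t+M$ for all large $t$. For the matching lower rate, spreading yields $u(t,\cdot)\to1$ locally uniformly, and a Stefan subsolution whose front propagates at speed arbitrarily close to $c^*$ — together with a lower barrier keeping $u$ bounded away from $0$ a fixed distance behind the front — gives $\liminf_{t\to\infty}h(t)/t\ge c^*$ as well as nontriviality of the limit profile near the front; building this subsolution for a combustion nonlinearity is delicate and is, together with Stage (iii), the main technical point. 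Parabolic boundary estimates also bound $h'$ for $t\ge1$.

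\emph{Stage (iii).} With $h'$ bounded, along any $t_n\to\infty$ the functions $(s,y)\mapsto u(t_n+s,h(t_n)+y)$ and fronts $s\mapsto h(t_n+s)-h(t_n)$ subconverge, locally uniformly, to an entire solution $w$ on $\{y<H(s)\}$ with $0\le w\le1$, $w\not\equiv0$, $w(s,H(s))=0$, $H(0)=0$, $H'$ bounded and $H'(s)=-\mu w_y(s,H(s))$. The core is then a Liouville-type rigidity theorem: any such $w$ must be the semi-wave, $H(s)\equiv c^*s$ and $w(s,y)=q^{c^*}(c^*s-y)$ (in particular $w(s,y)\to1$ as $y\to-\infty$) — I would prove it by sliding the monotone family $\{q^c\}_{c>0}$ against $w$ and applying a strong maximum/Hopf argument at the free boundary, and here $f'(1)<0$ is indispensable, since it makes $u\equiv1$ linearly stable and excludes other bounded entire solutions. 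Passing back, $u(t,h(t)+y)\to q^{c^*}(-y)$ uniformly on $[-L,0]$ for each $L$, whence $h'(t)=-\mu u_x(t,h(t))\to\mu q^{c^*}{}'(0)=c^*$. Finally, knowing $u$ is near $q^{c^*}$ at the front, the exponential stability of the semi-wave — realized through super/sub-solutions $q^{c^*}(\xi)\pm\delta e^{-\beta t}\psi(\xi)$ with fronts $c^*t+\sigma_\pm\mp\rho\delta e^{-\beta t}$, whose admissibility rests on the spectral gap of the linearization about $(c^*,q^{c^*})$ on the half-line with the Stefan condition ($f'(1)<0$ pushes the essential spectrum into the stable half-plane, and the free-boundary condition kills the translation eigenvalue) — yields $|h(t)-c^*t-\sigma|\le Ce^{-\beta t}$, so that $H_\infty:=\lim_{t\to\infty}(h(t)-c^*t)$ exists and $h'(t)\to c^*$. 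Applying the same reasoning to $u(t,-x)$ gives $g(t)+c^*t\to G_\infty$ and $g'(t)\to-c^*$. I expect the Liouville classification and the exponential rate — both genuinely using $f'(1)<0$ — to be the main obstacles, the semi-wave construction being routine.
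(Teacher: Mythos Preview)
This theorem is not proved in the present paper: it is quoted verbatim from \cite[Theorem~1.2]{DMZ} as background material, with no accompanying argument. There is therefore no proof in the paper to compare your proposal against.

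For what it is worth, your outline is in the right spirit for how results of this type are established, but it diverges from the actual method of \cite{DMZ}. That paper does not pass through a Liouville classification of entire solutions plus a spectral-gap linearization argument; instead, the sharper estimate $h(t)-c^*t\to H_\infty$ is obtained directly by constructing explicit super- and sub-solutions of the form $q^{c^*}(c^*t+\zeta(t)-x)\pm$ small corrections, with carefully chosen time-dependent shifts $\zeta(t)$ whose derivatives are integrable. The condition $f'(1)<0$ enters because it gives exponential convergence $q^{c^*}(z)\to 1$, which is what makes the error terms in the barrier construction controllable. Your Stage~(iii) strategy --- compactness at the moving front, Liouville rigidity, then exponential stability via linearized spectrum --- is a legitimate alternative route used in related problems, but it is more machinery than \cite{DMZ} actually deploys, and your sketch leaves the two hardest steps (the rigidity theorem and the spectral analysis with the free-boundary condition) essentially unproved.
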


The more recent paper \cite{DLZ} established some estimates on $h(t)$ in the transition case.

\begin{theorem}[\mbox{\cite[Theorem 1.2]{DLZ}}]\label{DLZthm12}
Assume that $f$ is of combustion type and transition happens. Then we have
\begin{align*}
h(t), -g(t)=2\xi_0\,\sqrt{t}\{1+o(1)\},\ \ \ {\rm as}\ t\to\infty,
\end{align*}
where $\xi_0>0$ is uniquely determined by
\begin{align}\label{xi0}
2\xi_0e^{\xi_0^2}\int_0^{\xi_0}e^{-s^2}ds=\mu\theta.
\end{align}
\end{theorem}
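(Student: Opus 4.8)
The plan is to establish matching lower and upper bounds, $h(t)\ge 2\xi_0\sqrt t\{1+o(1)\}$ and $h(t)\le 2\xi_0\sqrt t\{1+o(1)\}$ (and the same for $-g(t)$, by the symmetric argument), via the comparison principle for \eqref{problem} from \cite{DL}, using the elementary self-similar solutions of the one-phase Stefan problem for the pure heat equation as barriers. For $\lambda>0$, a shift $\tau$ and $\beta>0$, the function
\[
w_{\lambda,\beta}(t,x)=\lambda\,\frac{\displaystyle\int_{|x|}^{\,2\beta\sqrt{t+\tau}}e^{-s^{2}/(4(t+\tau))}\,ds}{\displaystyle\int_{0}^{\,2\beta\sqrt{t+\tau}}e^{-s^{2}/(4(t+\tau))}\,ds}
\]
solves the heat equation exactly on $\{0<|x|<2\beta\sqrt{t+\tau}\}$, vanishes on $|x|=2\beta\sqrt{t+\tau}$, equals $\lambda$ at $x=0$, and along $x=2\beta\sqrt{t+\tau}$ satisfies the Stefan balance $h'=-\mu w_x$ precisely when $2\beta e^{\beta^{2}}\int_{0}^{\beta}e^{-r^{2}}\,dr=\mu\lambda$; comparison with \eqref{xi0} shows the free boundary of $w_{\theta,\xi_0}$ is exactly $2\xi_0\sqrt{t+\tau}$. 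I would run the comparison on the half-line $0\le x\le h(t)$ with $x=0$ as a prescribed boundary (legitimate since $u$ is even in $x$), using that transition, by Theorem \ref{theoremDL}, forces $u(t,x)\to\theta$ locally uniformly, in particular $u(t,0)\to\theta$.

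For the lower bound, fix $\epsilon\in(0,\theta)$ and let $\beta_\epsilon\in(0,\xi_0)$ be determined by $2\beta_\epsilon e^{\beta_\epsilon^2}\int_0^{\beta_\epsilon}e^{-r^2}\,dr=\mu(\theta-\epsilon)$, so that $\beta_\epsilon\uparrow\xi_0$ as $\epsilon\downarrow0$. Pick $R>0$ and $T>0$ with $u(T,x)\ge\theta-\epsilon$ for $|x|\le R$, with $h(T)\ge R$, and with $u(t,0)\ge\theta-\epsilon$ for all $t\ge T$; choose the shift so that $2\beta_\epsilon\sqrt{T+\tau}=R$. Restricted to $0\le x\le\underline h(t):=2\beta_\epsilon\sqrt{t+\tau}$, the function $\underline u:=w_{\theta-\epsilon,\beta_\epsilon}$ is a smooth solution of the heat equation staying below $\theta-\epsilon$, so $f(\underline u)\equiv0$ by \eqref{combustion}; it obeys $\underline u(t,0)=\theta-\epsilon\le u(t,0)$, $\underline u(t,\underline h(t))=0$, $\underline h'\le-\mu\underline u_x(t,\underline h(t))$ (by the choice of $\beta_\epsilon$), $\underline h(T)=R\le h(T)$ and $\underline u(T,\cdot)\le\theta-\epsilon\le u(T,\cdot)$ on $[0,R]$. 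Hence $(\underline u,\underline h)$ is a lower solution and the comparison principle gives $h(t)\ge 2\beta_\epsilon\sqrt{t+\tau}$ for $t\ge T$, so $\liminf_{t\to\infty}h(t)/\sqrt t\ge 2\beta_\epsilon$; letting $\epsilon\downarrow0$ yields the lower bound.

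The upper bound is the delicate part and the main obstacle. One cannot simply reverse the construction: any height-$(\theta+\epsilon)$ profile exceeds $\theta$ near the centre, where $f>0$ by \eqref{combustion}, so it is not an upper solution; and one cannot tolerate $f(\bar u)>0$ on an $x$-interval of length of order $\sqrt t$, since that would force linear rather than $\sqrt t$ growth of the front. The remedy — and the technical heart of the argument — is a preliminary flame-localization estimate: in the transition case $\max_x u(t,x)=\theta+o(1)$ and the set $\{x:\,u(t,x)>\theta\}$ is contained in $[-\ell(t),\ell(t)]$ with $\ell(t)=o(\sqrt t)$ (indeed $\ell(t)$ bounded when $f$ is nondegenerate at $\theta$), which I would prove by a separate barrier argument exploiting that spreading does not occur (one convenient ingredient being the a priori identity, obtained by integrating \eqref{problem}, that the total reaction $\int f(u(t,x))\,dx$ is small). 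Granting this, $u$ solves $u_t=u_{xx}$ and $u\le\theta$ on $\{|x|\ge\ell(t)\}$; one then compares $u$ on $\{\ell(t)\le x\le h(t)\}$, with $x=\ell(t)$ as prescribed boundary, against a self-similar heat upper solution anchored at value $\theta$ on $x=\ell(t)$ and with far front $\ell(t)+2(\xi_0+\epsilon)\sqrt{t+\tau}$. Choosing $\ell$ strictly outside the flame (so $u<\theta$ there) and $\tau$ large makes this upper solution dominate both $u$ and its front at the initial time, and the comparison principle then gives $h(t)\le\ell(t)+2(\xi_0+\epsilon)\sqrt{t+\tau}=2(\xi_0+\epsilon)\sqrt t\{1+o(1)\}$, whence $\limsup_{t\to\infty}h(t)/\sqrt t\le 2(\xi_0+\epsilon)$; letting $\epsilon\downarrow0$ finishes the proof. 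The remaining technicality is the bookkeeping for the slowly moving inner boundary $x=\ell(t)$, which disappears if the flame is bounded (take $\ell(t)$ constant) and can otherwise be absorbed into the shift $\tau$.
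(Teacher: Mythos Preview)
The paper does not contain its own proof of this statement: Theorem~\ref{DLZthm12} is quoted from \cite[Theorem~1.2]{DLZ} and used as input. What the paper does contain are the refined bounds of Section~3, which in particular reprove the lower half of Theorem~\ref{DLZthm12} (Lemma~\ref{l3.8} gives $h(t)\ge 2\xi_0\sqrt t+O(1)$ for symmetric data) and, under the extra hypothesis \eqref{f2}, the upper half as well. So there is no direct proof to compare against, but one can compare your strategy with the techniques the paper imports from \cite{DLZ}.

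Your lower bound is essentially correct and coincides with the argument of Lemma~\ref{l3.8}: the self-similar Stefan solution $w_{\theta,\xi_0}$ (or $w_{\theta-\epsilon,\beta_\epsilon}$) is placed under $u$ on a half-interval with $x=0$ as a prescribed boundary, using $u(t,0)>\theta$ (which the paper secures by first reducing to symmetric decreasing data via Lemma~\ref{DLZLa41}; you invoke evenness without this reduction, so you should add that step). No objection here.

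The upper bound, however, has a genuine gap. You correctly identify that the whole difficulty is the ``flame-localization'' estimate $\theta(t)=o(\sqrt t)$ (in the paper's notation, the unique point where $u(t,\theta(t))=\theta$), but your proposed route to it --- ``a separate barrier argument'' together with the integral identity $\frac{d}{dt}\int u\,dx=\int f(u)\,dx+\text{boundary terms}$ --- is not how this is done and is unlikely to succeed. The integral identity only tells you that $\int f(u)\,dx$ is small on average in time; it does not localize the flame at each fixed $t$, and a naive barrier argument runs into the same circularity you already noted (any upper barrier that covers $u$ near the centre exceeds $\theta$ there). In \cite{DLZ}, and as reflected in Lemma~\ref{thetatupper} of this paper, the control of $\theta(t)$ is obtained by an intersection-number (zero-number) argument: one compares $u(t,\cdot)$ with the stationary profiles $V_b$ of \eqref{ivp}, tracks the sign-changing pattern of $u(t,\cdot)-V_b(\cdot)$ via the results of Angenent/Matano type (Lemmas~4.6--4.8 of \cite{DLZ}), and deduces $\theta(t)\le l(b(t))$ with $b(t)$ determined implicitly by $L(b(t))=h(t)$. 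That mechanism is what you are missing; once it is in place, your Stefan upper barrier anchored at $x=\ell(t)$ is exactly the construction of Lemma~\ref{l3.4}.
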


Regarding related studies on the free boundary problem, one may refer to \cite{DG,DLI,DDL,DMZ2} and the references therein.

The purpose of the current paper is to obtain more delicate estimates on the small order $o(1)\sqrt{t}$ in Theorem \ref{DLZthm12}
for a special type of nonlinearity $f$. Precisely, by assuming that
there exist constants $p\ge 1$ and small $\sigma>0$ such that
\begin{align}\label{f2}
f(u)=(u-\theta)^p\ \ {\rm in}\ [\theta, \theta+\sigma),
\end{align}
we aim to investigate how the power $p$ in \eqref{f2} influences the asymptotic behavior $h(t)$ for large time $t$ in the transition case.
Our main result reads as follows.

\begin{thm}\label{theoremA}
Assume that $f$ is of combustion type satisfying \eqref{f2} for some $p\ge 1$ and $\sigma>0$, and that the transition happens. Then the following assertions hold.
\begin{enumerate}[{\rm (1)}]
\item If $p>3$, we have
\begin{align*}
2\xi_0\sqrt{t}+m t^{\frac{1}{2}-\frac{1}{p-1}}\le -g(t),\ \ h(t)\le 2\xi_0\sqrt{t}+M t^{\frac{1}{2}-\frac{1}{p+1}},\ \ \mbox{as $t\to\infty$},
\end{align*}
where $m$ and $M$ are some positive constants and $\xi_0>0$ is the constant uniquely given by \eqref{xi0}.
\item If $1\leq p\leq 3$, we have
\begin{align*}
2\xi_0\sqrt{t}+O(1)\le -g(t),\ \ h(t)\le 2\xi_0\sqrt{t}+M t^{\frac{1}{2}-\frac{1}{p+1}},\ \ \mbox{as $t\to\infty$},
\end{align*}
where $\xi_0>0$ is the constant uniquely given by \eqref{xi0}.
\end{enumerate}
\end{thm}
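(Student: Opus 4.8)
The plan is to work in the self-similar variables $\eta=x/\sqrt t$, in which the leading-order profile of the transition solution is the strictly decreasing caloric self-similar profile $\Theta(\eta)=\theta-c\int_0^{|\eta|}e^{-s^2/4}\,ds$ on $[-2\xi_0,2\xi_0]$, with $c=\theta/\int_0^{2\xi_0}e^{-s^2/4}\,ds$; a direct computation shows that the free-boundary relation for this profile is exactly \eqref{xi0}. Since $f\equiv0$ on $[0,\theta]$ and $u\to\theta$ locally uniformly, for large $t$ (using \eqref{f2} and $\sup_x(u-\theta)_+\to0$) the overshoot $v:=(u-\theta)_+$ solves $v_t=v_{xx}+v^p$ on $\{v>0\}$, and this set has width $o(\sqrt t)$, for otherwise a limit of the rescaled profiles would be a caloric self-similar profile with a flat part, which cannot exist. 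The task is then to estimate the first correction $h(t)-2\xi_0\sqrt t$ (and $-g(t)-2\xi_0\sqrt t$), and the tool is a pair of explicit super- and sub-solutions of \eqref{problem}.

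For the upper bound I would construct a supersolution $\bar u$ with free boundaries $\bar h(t)=2\xi_0\sqrt t+M\,t^{1/2-1/(p+1)}$ and $\bar g(t)=-\bar h(t)-A$ (the large constant $A$ absorbing the asymmetry of $u_0$), glued from two pieces: on $\rho(t)\le|x|\le\bar h(t)$ a rescaled caloric profile $\theta\,\widetilde V(x/\bar h(t))\le\theta$ adapted to the boundary $\bar h$, and on $|x|\le\rho(t)$ a small \emph{concave} cap lifting $\bar u$ to $\theta+\varepsilon(t)$ at the centre, with $\varepsilon(t)\asymp t^{-1/(p+1)}$ and $\rho(t)\asymp t^{1/2-1/(p+1)}$. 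Some overshoot above $\theta$ is unavoidable, since a profile with cap $\le\theta$ has $f\equiv0$ and hence a bounded free boundary by the mass identity below; and it cannot be a flat plateau, because there $\bar u_t=\varepsilon'<0<\varepsilon^p=f(\bar u)$. Thus the diffusive flattening of the cap, of size $\asymp\varepsilon/\rho^2$, must dominate the reaction $\asymp\varepsilon^p$, which forces $\rho^2\le C\varepsilon^{1-p}$, while the interface $|x|=\rho(t)$ must be a concave corner of $\bar u$, which forces $\rho\ge c\,\varepsilon\sqrt t$; combining the two gives $\varepsilon\le C't^{-1/(p+1)}$, the origin of the exponent. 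One then verifies: (i) the differential inequality on the cap (an ODE inequality of the type $\varepsilon'+c\,\varepsilon/\rho^2\ge\varepsilon^p$, holding with room to spare for a small constant in $\varepsilon$); (ii) the differential inequality on the outer piece, where $\bar h\bar h'>2\xi_0^2$ because the boundary is pushed out, which together with the self-similar ODE for $\widetilde V$ gives $\bar u_t-\bar u_{xx}\ge0=f(\bar u)$ since $\bar u\le\theta$ there; (iii) the correct sign of the jump of $\bar u_x$ across $|x|=\rho(t)$; (iv) the free-boundary inequality $\bar h'(t)\ge-\mu\bar u_x(t,\bar h(t))$, which after \eqref{xi0} reduces to a positive extra term of order $t^{-1/(p+1)}$; and (v) the initial ordering $\bar u(t_0,\cdot)\ge u(t_0,\cdot)$ at some large time $t_0$. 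Comparison then yields $h(t)\le\bar h(t)$ and $-g(t)\le-\bar g(t)=\bar h(t)+A$ for $t\ge t_0$.

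For the lower bound I would use a different and coarser device, whence the two approaches. Since concavity works against the subsolution inequality (and a localized overshoot cap would present a gluing corner of the wrong sign), a subsolution cannot exploit the overshoot as efficiently; one instead lets it decay at the ``pure reaction'' rate $\varepsilon'\approx-\varepsilon^p$, that is $\varepsilon(t)\asymp t^{-1/(p-1)}$ for $p>1$ (exponentially fast for $p=1$), and builds a subsolution around such an $\varepsilon$ whose front is dragged out past $2\xi_0\sqrt t$. The accumulated effect on $-g(t)$ is then of order $t^{1/2-1/(p-1)}$ when $p>3$, and merely $O(1)$ when $1\le p\le3$, because in that range the corresponding contribution to the front shift is integrable in time. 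In parallel, the mass identity $\frac{d}{dt}\bigl(\mu\int_{g(t)}^{h(t)}u\,dx+h(t)-g(t)\bigr)=\mu\int_{g(t)}^{h(t)}f(u)\,dx=\mu\int v^p\,dx\ge0$, combined with Theorem \ref{DLZthm12}, both produces the forced overshoot and converts lower bounds on $\int v^p$ into lower bounds on $h(t)-g(t)$, hence on $-g(t)$ once the upper bound is available.

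The step I expect to be the main obstacle is (v): anchoring the comparison with the correct ordering at a large initial time. Because the cap is pinned to the scale $t^{-1/(p+1)}$, the inequality $\bar u(t_0,\cdot)\ge u(t_0,\cdot)$ near the centre requires the a priori estimate $\sup_x(u(t,x)-\theta)_+\le C\,t^{-1/(p+1)}$, which is not supplied by Theorems \ref{theoremDL} and \ref{DLZthm12}. I would therefore first prove a weak decay $\sup_x(u-\theta)_+\le C\,t^{-\delta_0}$ ($\delta_0>0$ small) via a crude supersolution, and then bootstrap, each step using the comparison above to raise the exponent $\delta_n\uparrow1/(p+1)$; reconciling at every stage the competing constraints that tie $\varepsilon$ and $\rho$ together, handling the $g$--$h$ asymmetry, and controlling the regularity of the glued profiles form the technical heart of the argument. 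The same preliminary decay also feeds the lower-bound construction, where the residual difficulty that a subsolution accommodates an overshoot above $\theta$ only poorly is precisely why the lower exponent $1/(p-1)$ is weaker than the (presumably sharp) upper exponent $1/(p+1)$.
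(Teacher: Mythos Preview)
Your strategy captures the correct exponents and mechanisms, but it differs substantially from the paper's route, and the difference matters precisely at the obstacle you flagged.

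\textbf{Upper bound.} You propose a global supersolution with a concave cap of height $\varepsilon(t)\asymp t^{-1/(p+1)}$ above $\theta$, and you correctly identify that anchoring it (your step (v)) requires the a~priori decay $\sup_x(u-\theta)_+\le C\,t^{-1/(p+1)}$, to be obtained by bootstrap. The paper avoids this entirely. It first proves, via zero-number arguments against the family of \emph{stationary} profiles $V_b$ solving $V''+f(V)=0$, $V(0)=\theta+b$, $V'(0)=0$, that the level set $\theta(t):=\{x>0:u(t,x)=\theta\}$ satisfies $\theta(t)\le M\,t^{1/2-1/(p+1)}$. This estimate needs no information about the size of $(u-\theta)_+$, only the intersection structure of $u(t,\cdot)$ with $V_b$ inherited from \cite{DLZ}. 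Then the supersolution is simply the \emph{shifted Stefan profile} $\overline u(t,x)=\rho(t,x-\xi(t))$ with $\xi(t)=M\,t^{1/2-1/(p+1)}$, compared with $u$ only on the partial domain $[\xi(t),\overline h(t)]$ (Lemma~\ref{comp2}); there $\overline u\le\theta$ so $f(\overline u)=0$, and the boundary condition at $x=\xi(t)$ is just $\overline u(t,\xi(t))=\theta\ge u(t,\xi(t))$, which holds because $\xi(t)\ge\theta(t)$. No cap, no gluing, no bootstrap.

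\textbf{Lower bound.} You propose letting the overshoot decay at the pure reaction rate $\varepsilon'\approx-\varepsilon^p$. The paper instead imports an external fact: for $p>3$ the Haraux--Weissler self-similar equation $\varphi/(p-1)+\tfrac{y}{2}\varphi'+\varphi''+\varphi^p=0$ admits a strictly positive even solution, so $w(t,x)=\theta+t^{-1/(p-1)}\varphi(x/\sqrt t)$ is an \emph{exact} solution of $w_t=w_{xx}+(w-\theta)^p$. A single zero-number argument then shows $u(t,0)>\theta+\gamma(t+T)^{-1/(p-1)}$ for all $t$, and this feeds directly into a time-dependent Stefan subsolution with $\beta(t)$ defined by $\Phi(\beta(t))=\mu(\theta+\xi(t))$. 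Your mass-identity route is not used. For $1\le p\le3$ the paper just runs the unperturbed Stefan profile as a subsolution (since $u(t,0)>\theta$ always), yielding $O(1)$.

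In short: your plan is not wrong in spirit, but the paper sidesteps the anchoring difficulty you worry about by (a) estimating the \emph{location} of the $\theta$-level set rather than the \emph{height} of the overshoot, via intersection comparison with the ODE family $V_b$, and (b) using only partial-domain comparison for the supersolution. For the lower bound the Haraux--Weissler positive self-similar solution is the key input you are missing; without it your subsolution construction remains heuristic.
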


\begin{remark}
Theorem \ref{theoremA} shows that when $p>3$, $h(t)-2\xi_0\sqrt{t}$ and $g(t)+2\xi_0\sqrt{t}$ are unbounded for large time $t$
while $h(t)-2\xi_0\sqrt{t}$ and $g(t)+2\xi_0\sqrt{t}$ become bounded when $p=1$. Therefore,
our results reveal that the nonlinear reaction term $f$ has a qualitative impact on the propagation speed for the transition solution.
\end{remark}

\begin{remark}
In light of Theorem \ref{theoremA}, it is unclear to us that $h(t)-2\xi_0\sqrt{t}$ and $g(t)+2\xi_0\sqrt{t}$ are unbounded or bounded for large time $t$ when $1<p\leq 3$.
It would be of interest to investigate the refined propagation speed of the transition solution for the nonlinearity $f$ satisfying \eqref{f2} with $0<p<1$ or $1<p\leq 3$.
\end{remark}

\section{Preliminary results}

In this section, we give some basic facts, which will be used frequently in the forthcoming sections.
We shall begin with some comparison principles associated with problem \eqref{problem}. The first one, which
will be used later to estimate the lower bound of $h(t)$, is due to \cite{DL}.

\begin{lemma}[\mbox{\cite[Lemma 2.2]{DL}}]\label{comp1}
Suppose that $f\in C^{0,1}_{\rm loc}$ and $f(0)=0$, $T\in (0,\infty)$, $\xi$, $\overline{h}\in C^1[0,T]$, $\overline{u}\in C(\overline{D}_T)\cap C^{1,2}(D_T)$ with
$D_T=\{(t,x)\in\mathbb{R}^2:0<t\le T,\ \xi(t)<x<\overline{h}(t)\}$, and
\begin{align*}
\left\{
\begin{array}{ll}
\overline{u}_t\ge \overline{u}_{xx}+ f(\overline{u}),& 0<t\le T,\ \xi(t)<x<\overline{h}(t), \\
\overline{u}(t,\xi(t))\ge u(t, \xi(t)),& 0<t\le T,  \\
\overline{u}(t,\overline{h}(t))=0,\ \overline{h}^{\prime}(t)\ge -\mu\overline{u}_x(t, \overline{h}(t)),& 0<t\le T. \\
\end{array}\right.
\end{align*}
If
\begin{align*}
\xi(t)\ge g(t)\ {\it in}\ [0,T],\ h_0\le \overline{h}(0),\ u_0(x)\le\overline{u}(0,x)\ \ {\it in}\ [\xi(0),h_0],
\end{align*}
where $(u,g,h)$ is a solution to {\rm (\ref{problem})}, then
\begin{align*}
&h(t)\le \overline{h}(t)\ {\it in}\ (0,T],\\
&u(t,x)\le\overline{u}(t,x)\ \ {\it for}\ t\in (0,T]\ \ {\it and}\ \ \xi(t)<x<\overline{h}(t).
\end{align*}
\end{lemma}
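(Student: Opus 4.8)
The plan is to treat $\overline u$ as a supersolution and prove the two conclusions in their natural order: first the ordering of the free boundaries $h(t)\le\overline h(t)$, and then the pointwise comparison $u\le\overline u$. The unifying tool is the difference $w:=\overline u-u$, which on any region where both $u$ and $\overline u$ are defined satisfies the linear parabolic differential inequality
\[
w_t-w_{xx}-c(t,x)\,w\ge 0,\qquad c(t,x):=\frac{f(\overline u)-f(u)}{\overline u-u},
\]
where $c$ is bounded because $f\in C^{0,1}_{\rm loc}$ and $u,\overline u$ are bounded on $\overline D_T$. Thus $w$ obeys the weak and strong maximum principles together with Hopf's boundary lemma for a linear parabolic operator with bounded coefficients, and these are the only analytic facts I need.

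For the boundary ordering I would argue by contradiction. Suppose the set $\{t\in(0,T]:h(t)>\overline h(t)\}$ is nonempty and let $\tau$ be its infimum; since $h(0)=h_0\le\overline h(0)$ and both curves are continuous, $h(\tau)=\overline h(\tau)=:x_0$ while $h(t)<\overline h(t)$ on $[0,\tau)$ (the degenerate possibility $h_0=\overline h(0)$ forcing $\tau=0$ I would dispose of by first replacing $\overline h$ with $\overline h+\delta$ and letting $\delta\downarrow 0$ at the end, which makes the initial separation strict). On the overlap $\Omega_\tau:=\{0<t<\tau,\ \xi(t)<x<h(t)\}$ one has $w\ge0$: indeed $w\ge0$ on $t=0$ over $[\xi(0),h_0]$ by hypothesis, $w(t,\xi(t))\ge 0$ by the lateral hypothesis (recalling $\xi\ge g$, so $u(t,\xi(t))$ is a genuine solution value), and $w(t,h(t))=\overline u(t,h(t))\ge0$ because $(t,h(t))$ lies in the interior of the domain of $\overline u$ for $t<\tau$, where $\overline u\ge0$. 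The maximum principle then gives $w\ge0$ on $\overline\Omega_\tau$, and at the touching point $w(\tau,x_0)=\overline u(\tau,\overline h(\tau))-u(\tau,h(\tau))=0$, a boundary minimum.

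The heart of the matter is then Hopf's lemma applied at $(\tau,x_0)$, where the free-boundary curve $x=h(t)$ (of class $C^1$) supplies the required interior tangency. If $w\not\equiv0$, the strong maximum principle makes $w>0$ inside $\Omega_\tau$, so Hopf yields a strictly negative outward normal derivative, i.e. $\overline u_x(\tau,x_0)<u_x(\tau,x_0)$. Feeding this into the two Stefan relations $h'(\tau)=-\mu u_x(\tau,x_0)$ and $\overline h'(\tau)\ge-\mu\overline u_x(\tau,x_0)$ gives $\overline h'(\tau)>h'(\tau)$. On the other hand $\overline h-h$ is positive on $[0,\tau)$ and vanishes at $\tau$, so $(\overline h-h)'(\tau)\le0$, that is $\overline h'(\tau)\le h'(\tau)$ --- a contradiction. (If instead $w\equiv0$ on $\Omega_\tau$, then $u\equiv\overline u$ there, the derivatives coincide, and the same Stefan relations show the boundaries cannot cross.) Hence $h(t)\le\overline h(t)$ on $(0,T]$.

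With the boundaries ordered and $\xi\ge g$, the common domain is $\{\xi(t)<x<h(t)\}$, and repeating the maximum-principle step there on the full interval $(0,T]$ yields $u\le\overline u$; for $x\in[h(t),\overline h(t))$ the inequality is immediate upon extending $u$ by $0$, since $\overline u\ge0$ in the interior of its domain. I expect the genuinely delicate step to be the Hopf-plus-Stefan argument: one must verify that $(\tau,x_0)$ admits a one-sided parabolic interior ball inside $\{w>0\}$ so that the boundary lemma applies and produces a \emph{strict} normal-derivative inequality, because it is precisely this strictness, transferred through the Stefan conditions, that contradicts $(\overline h-h)'(\tau)\le0$. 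The auxiliary sign condition $\overline u\ge0$ used above I would secure by a preliminary comparison of $\overline u$ with the trivial solution $0$, which is legitimate since $f(0)=0$ lets us write $f(\overline u)=\tilde c(t,x)\overline u$ with $\tilde c$ bounded; $\overline u\ge0$ already holds on the lateral boundaries and on the initial segment $[\xi(0),h_0]$, the remaining initial portion $(h_0,\overline h(0)]$ being nonnegative by the construction of the supersolutions to which the lemma is applied.
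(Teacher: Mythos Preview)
The paper does not supply its own proof of this lemma; it is quoted from \cite[Lemma~2.2]{DL}. Your contradiction-plus-Hopf strategy is the standard one and is essentially identical to the argument the paper \emph{does} write out for the companion comparison principle (Lemma~\ref{comp2}): form $w=\overline u-u$, obtain $w\ge0$ on the overlap region by the maximum principle, and at the first contact time extract a strict slope inequality via Hopf that contradicts the two Stefan conditions.

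One technical point deserves care. Your proposed handling of the borderline case $h_0=\overline h(0)$ by ``replacing $\overline h$ with $\overline h+\delta$'' does not work as written, since $\overline u$ is not defined past $x=\overline h(t)$ and there is no supersolution structure at the shifted boundary. The customary device (see \cite{DLI,DL}) is instead to perturb the \emph{solution}: replace $\mu$ by $(1-\varepsilon)\mu$ and shrink $u_0$ slightly so that the perturbed triple $(u^\varepsilon,g^\varepsilon,h^\varepsilon)$ satisfies $h^\varepsilon(0)<\overline h(0)$ and $u^\varepsilon_0<\overline u(0,\cdot)$ strictly; run your Hopf argument to get $h^\varepsilon<\overline h$ and $u^\varepsilon<\overline u$; then send $\varepsilon\downarrow0$ using continuous dependence of $(u,g,h)$ on $(\mu,u_0)$. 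Your observation that one also needs $\overline u\ge0$ along $x=h(t)$ (hence some nonnegativity of $\overline u(0,\cdot)$ on $(h_0,\overline h(0)]$, about which the lemma as stated is silent) is well taken; in all applications in this paper the supersolutions are nonnegative by construction, and the original source treats this hypothesis explicitly.
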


The function $\overline{u}$ or the triple $(\overline{u}, \xi, \overline{h})$ in Lemma \ref{comp1} is usually called an
upper solution of problem \eqref{problem}. We can define a lower solution by reversing all the inequalities in the suitable places above.

The following version of comparison principle will be used to estimate the upper bound of $h(t)$.

\begin{lemma}\label{comp2}
Suppose that $f\in C^{0,1}_{\rm loc}$ and $f(0)=0$, $T\in (0,\infty)$, $\xi$, $\overline{h}\in C^1[0,T]$, $\overline{u}\in C(\overline{D}_T)\cap C^{1,2}(D_T)$ with
$D_T=\{(t,x)\in\mathbb{R}^2:0<t\le T,\ \xi(t)<x<\overline{h}(t)\}$, and
\begin{align*}
\left\{
\begin{array}{ll}
\overline{u}_t\ge \overline{u}_{xx}+ f(\overline{u}),& 0<t\le T,\ \xi(t)<x<\overline{h}(t), \\
\left\{
\begin{array}{ll}
\overline{u}(t, \xi(t))\ge 0,& {\rm if}\ h(t)\le \xi(t), \\
\overline{u}(t, \xi(t))\ge u(t, \xi(t)), & {\rm if}\ \xi(t)<h(t),
\end{array}\right. & 0<t\le T, \\
\overline{u}(t, \overline{h}(t))=0, & 0<t\le T, \\
\overline{h}^{\prime}(t)\ge -\mu\overline{u}_x(t, \overline{h}(t)),& 0<t\le T.
\end{array}\right.
\end{align*}
If
\begin{align*}
&\xi(t)\ge 0,\ \ 0\le t\le T, \\
&h_0\le \xi(0)<\overline{h}(0), \\
&\overline{u}(0, x)\ge 0,\ \ \xi(0)\le x\le \overline{h}(0), \\
&\overline{u}(0, x)>0,\ \ \xi(0)<x<\overline{h}(0),
\end{align*}
and $u$ is the unique solution to problem \eqref{problem}, then
\begin{align*}
& u(t, x)\le \overline{u}(t, x),\ \ 0<t\le T, \ x\in (0, h(t))\cap (\xi(t), \overline{h}(t)), \\
& h(t)<\overline{h}(t),\ \ 0<t\le T.
\end{align*}
\end{lemma}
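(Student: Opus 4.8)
The plan is to prove the lemma by a ``first crossing time'' argument for the free boundaries, exactly as in the proof of Lemma \ref{comp1}; the only genuinely new feature is that here the left boundary $\xi(t)$ lies to the right of the origin and may overtake $h(t)$, so the set on which $u$ and $\overline u$ can be compared is $\{x:\xi(t)<x<\min\{h(t),\overline h(t)\}\}$, which may be empty for some $t$ and disconnected in the $(t,x)$-plane. Before starting I would record three easy facts: (a) $\xi(t)<\overline h(t)$ on $[0,T]$, which is implicit since $\overline u_x(t,\overline h(t))$ must make sense; (b) because $g(t)<-h_0\le 0\le\xi(t)$, whenever $\xi(t)<h(t)$ one has $\xi(t)\in(g(t),h(t))$ and hence $u(t,\xi(t))>0$, while $u(t,\xi(t))=0$ if $\xi(t)\ge h(t)$ (extend $u$ by $0$ outside $[g(t),h(t)]$), so in all cases the boundary hypothesis on $\overline u(t,\xi(t))$ amounts to $\overline u(t,\xi(t))\ge u(t,\xi(t))$; and (c) $\overline u$ is a bounded supersolution of a linear equation $v_t=v_{xx}+\tilde c v$ with $\tilde c=f(\overline u)/\overline u\in L^\infty(D_T)$ and $\overline u\ge 0$ on the parabolic boundary $\partial_p D_T$, hence $\overline u>0$ in the interior of $D_T$ by the strong maximum principle.

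The first step is to show $h(t)<\overline h(t)$ on $(0,T]$. Since $h(0)=h_0\le\xi(0)<\overline h(0)$, this holds near $t=0$; if it failed there would be a first time $t_1\in(0,T]$ with $h(t_1)=\overline h(t_1)$ and $h<\overline h$ on $(0,t_1)$. On the relatively open set $Q=\{(t,x):0<t<t_1,\ \xi(t)<x<h(t)\}$ the difference $w=\overline u-u$ satisfies a linear parabolic inequality $w_t\ge w_{xx}+c(t,x)w$ with $c\in L^\infty$; on the parabolic boundary of each connected component $w\ge 0$ --- along $x=\xi(t)$ by (b), along $x=h(t)$ because there $\xi(t)<h(t)<\overline h(t)$ so $w=\overline u(t,h(t))>0$ by (c), at points where $\xi(t)=h(t)$ because $w=\overline u(t,\xi(t))\ge 0$, and no component meets $\{t=0\}$ since the slice $\{x:\xi(0)<x<h(0)\}$ is empty --- so the maximum principle gives $w\ge 0$ on $Q$, i.e. $u\le\overline u$ on the overlap for $t<t_1$. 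At the crossing point $P=(t_1,h(t_1))=(t_1,\overline h(t_1))$ one has $\xi(t_1)<h(t_1)$, hence the window $(\xi(t),h(t))$ is nonempty just below $t_1$ and $P$ lies on the boundary of a component $Q^*$ of $Q$ whose interior abuts $P$; there $w\ge 0$, $w(P)=0$, and $w>0$ in the interior of $Q^*$ (by the strong minimum principle, using $w=\overline u(t,h(t))>0$ on part of its closure). Hopf's boundary lemma applied to $w$ at $P$ --- for which one also uses the relations $u_t=\mu u_x^2$, $\overline u_t\ge\mu\overline u_x^2$ at $P$ coming from the free boundary conditions, in order to control the parabolic ``corner'' at $P$ --- yields $\overline u_x(t_1,\overline h(t_1))<u_x(t_1,h(t_1))$. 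Combined with $h'(t_1)=-\mu u_x(t_1,h(t_1))$, $\overline h'(t_1)\ge-\mu\overline u_x(t_1,\overline h(t_1))$ and $\mu>0$, this gives $\overline h'(t_1)>h'(t_1)$, contradicting $\overline h'(t_1)\le h'(t_1)$ (which holds because $\overline h-h>0$ on $(0,t_1)$ and vanishes at $t_1$). Hence $h<\overline h$ on all of $(0,T]$.

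The second step is immediate: with $h<\overline h$ now known on all of $(0,T]$, I would rerun the maximum principle estimate of the first step with $t_1$ replaced by $T$; the same boundary analysis of $w=\overline u-u$ on $\{(t,x):0<t\le T,\ \xi(t)<x<h(t)\}$ gives $w\ge 0$ there, and since $\xi(t)\ge 0$ and $h(t)<\overline h(t)$ this set equals $(0,h(t))\cap(\xi(t),\overline h(t))$, which is the asserted conclusion.

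The main obstacle I anticipate is not conceptual but a matter of careful bookkeeping at two places. First, the overlap region $Q$ is in general disconnected and intermittently empty (because $\xi$ may overtake $h$), so the maximum principle must be run component by component, with the verifications that no component descends to $\{t=0\}$ and that the crossing set $\{\xi(t)=h(t)\}$ transmits the correct sign of $w$. Second, the use of Hopf's lemma at the free-boundary crossing point $P$, which sits at a parabolic corner of the comparison domain and therefore needs the extra input from the free-boundary relations above (or, equivalently, a preliminary smooth approximation of $h$ from inside) in order to produce the \emph{strict} slope inequality. Everything else reduces to the routine maximum-principle arguments already employed in \cite{DL}.
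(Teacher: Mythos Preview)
Your proposal is correct and follows essentially the same route as the paper's proof: argue by contradiction at a first crossing time $t^*$ of $h$ with $\overline h$, apply the parabolic maximum principle to $w=\overline u-u$ on the overlap region $\{\xi(t)<x<h(t)\}$ to get $w>0$ there, then invoke the Hopf lemma at the point $(t^*,h(t^*))=(t^*,\overline h(t^*))$ to obtain $\overline h'(t^*)>h'(t^*)$, contradicting the first-crossing condition $\overline h'(t^*)\le h'(t^*)$.

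A few remarks on the differences. The paper is terser: it simply picks $t_*<t^*$ with $h(t_*)=\xi(t_*)$, $\xi<h<\overline h$ on $(t_*,t^*)$, and $h(t^*)=\overline h(t^*)$, rather than speaking of connected components of $Q$; your more careful bookkeeping is fine but not strictly necessary, since only the last component abutting the crossing time matters. Your observation (c) that $\overline u>0$ in the interior of $D_T$ is exactly what the paper uses (writing ``$\overline u(t,h(t))>0=u(t,h(t))$'') without isolating it beforehand. Finally, your concern about a ``parabolic corner'' at $P$ and the auxiliary relations $u_t=\mu u_x^2$, $\overline u_t\ge\mu\overline u_x^2$ is an overcomplication: since $h\in C^1$, the lateral boundary $x=h(t)$ is smooth through $t=t^*$, and the standard parabolic Hopf lemma applies directly at $P$, yielding $w_x(P)<0$ with no extra input needed. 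The paper just writes ``An application of the Hopf lemma'' at this step.
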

\begin{proof}
If $h(t)\le \xi(t)$ for $t\in(0, T]$, nothing is needed to prove. Suppose that there exist $t_*$, $t^*\in (0,T]$ with $t_*<t^*$ such that
\begin{align*}
&h(t_*)=\xi(t_*), \\
&\xi(t)<h(t)<\overline{h}(t),\ \ t\in (t_*, t^*), \\
&h(t^*)=\overline{h}(t^*).
\end{align*}
Then we have
\begin{align}\label{hprime}
h'(t^*)\ge \overline{h}'(t^*).
\end{align}

Let $\Omega_*=\{(t,x)\in\mathbb{R}^2| t_*<t\le t^*,\ \xi(t)<x<h(t)\}$. It follows from the maximum principle for parabolic equations that
\begin{align*}
u(t,x)\le \overline{u}(t,x)\ \ {\rm in}\ \Omega_*.
\end{align*}
Since $\overline{u}(t, h(t))>0=u(t, h(t))$ for $t\in (t_*, t^*)$, the strong maximum principle for parabolic equations infers that
\begin{align*}
u(t,x)<\overline{u}(t,x)\ \ {\rm in}\ \Omega_*.
\end{align*}
By setting $w(t,x)=\overline{u}(t, x)-u(t,x)$, then $w$ satisfies
\begin{align*}
w_t-w_{xx}-c(t,x)w\ge 0\ \ {\rm in}\ \Omega_*
\end{align*}
for some bounded function $c(t,x)$. We also have
\begin{align*}
&w(t,x)>0\ \ \ {\rm in}\ \Omega_*, \\
&w(t^*, h(t^*))=\overline{u}(t^*, \overline{h}(t^*))-u(t^*, h(t^*))=0.
\end{align*}
An application of the Hopf lemma allows one to conclude that $w_x(t^*, h(t^*))<0$, that is,
\begin{align*}
\overline{h}'(t^*)\ge -\mu\overline{u}_x(t^*, \overline{h}(t^*))>-\mu u_x(t^*, h(t^*))=h'(t^*),
\end{align*}
which is a contradiction against \eqref{hprime}.
\end{proof}

The following lemma also plays a key role in our later analysis. To stress the dependence of the unique solution $(u(t,x), g(t), h(t))$ of \eqref{problem}
on the initial function $u_0$, we will sometimes use notation $u(t,x)=u(t,x; u_0)$, $h(t)=h(t; u_0)$ and $g(t)=g(t; u_0)$.

\begin{lemma}[\mbox{\cite[Lemma 4.1]{DLZ}}]\label{DLZLa41}
Suppose that $f$ is of combustion type and that for any compactly supported functions $u_0^1$ and $u_0^2$ satisfying \eqref{u0} such that
the transition happens for $u(t, x; u_0^i)$, $i=1,2$. Then there exists a constant $C>0$ such that
\begin{align*}
|h(t;u_0^1)-h(t; u_0^2)|\le C,\ \ |g(t; u_0^1)-g(t; u_0^2)|\le C.
\end{align*}
\end{lemma}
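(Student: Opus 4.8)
The plan is to reduce everything to the comparison principles of Lemmas \ref{comp1} and \ref{comp2}, after first exploiting two structural features of the transition regime: the translation invariance of \eqref{problem} in $x$, and the fact that a transition solution eventually feels no nonlinearity at all. I would begin with two reductions. Since $u(t,x;u_0(-\cdot))=u(t,-x;u_0)$, one has $h(t;u_0(-\cdot))=-g(t;u_0)$ and $g(t;u_0(-\cdot))=-h(t;u_0)$, and $u_0(-\cdot)$ is again a compactly supported transition datum; hence it suffices to prove the single estimate $|h(t;u_0^1)-h(t;u_0^2)|\le C$ for all such data, and by swapping the roles of $u_0^1,u_0^2$ it even suffices to prove $h(t;u_0^1)\le h(t;u_0^2)+C$. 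Throughout, translates $u_0(\cdot-a)$ may be used freely, as they produce the translated solution with fronts $g(t;u_0)+a$, $h(t;u_0)+a$, for which transition still occurs.

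The key observation is the following. Because transition happens, $u(t,\cdot;u_0^i)\to\theta$ locally uniformly; combining this with a compactness argument (a piece of the solution staying bounded above $\theta+\delta$ would, after recentering and passing to an entire limit, force spreading by the trichotomy of Theorem \ref{theoremDL}) one obtains a time $T_0$ such that $0\le u(t,x;u_0^i)\le\theta$ for all $t\ge T_0$ and all $x$; this inequality is then forward invariant since $u$ vanishes on the fronts and solves the heat equation wherever $u\le\theta$. Consequently, by \eqref{combustion}, $f\big(u(t,x;u_0^i)\big)\equiv 0$ for $t\ge T_0$, so on $[T_0,\infty)$ each $u(\cdot;u_0^i)$ solves the \emph{pure one-phase Stefan problem} $u_t=u_{xx}$ in $g(t)<x<h(t)$ with $u=0$, $g'=-\mu u_x$, $h'=-\mu u_x$ on the fronts. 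For this \emph{linear} problem the comparison principles of Lemmas \ref{comp1}--\ref{comp2} apply without any obstruction coming from the nonlinearity (translates, time-shifts and suitable multiples of solutions are again super/subsolutions), which is precisely what is blocked for the combustion equation by the interval $(\theta,1)$ where $f>0$.

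With this in hand I would run the comparison as follows. Fix a large time $T\ge T_0$. By Theorem \ref{DLZthm12}, $h(T;u_0^i)=2\xi_0\sqrt T\,(1+\varepsilon_i(T))$ with $\varepsilon_i(T)\to0$, and by interior parabolic estimates both profiles $u(T,\cdot;u_0^i)$ are, after rescaling the spatial variable by $\sqrt T$, uniformly close to the common self-similar profile $V$ solving $V''+\tfrac{\xi}{2}V'=0$ with plateau value $\theta$, whose parameter is fixed by \eqref{xi0}. Using that the supports of $u(T,\cdot;u_0^i)$ are compact, one chooses a shift $a=a(T)$ with $h(T;u_0^1)\le h(T;u_0^2)+a$, $g(T;u_0^1)\ge g(T;u_0^2)+a$ and $u(T,x;u_0^1)\le u(T,x-a;u_0^2)$ on the support of $u(T,\cdot;u_0^1)$; applying Lemma \ref{comp1} on $[T,\infty)$ with the translated datum $u_0^2(\cdot-a)$ then yields $h(t;u_0^1)\le h(t;u_0^2)+a(T)$ for all $t\ge T$. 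The remaining task is to show that $a(T)$ can be taken bounded uniformly in $T$.

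I expect this last step to be the main obstacle, since the $o(\sqrt t)$ discrepancy in Theorem \ref{DLZthm12} is a priori only enough to take $a(T)=o(\sqrt T)$. What is needed is a quantitative stability (contraction) estimate for the front of the linear one-phase Stefan problem: the leading term $2\xi_0\sqrt t$ cancels automatically because the constant $\xi_0$ from \eqref{xi0} is common to both solutions, but controlling the $O(1)$ correction requires showing that the Stefan flow damps front perturbations that are sublinear in $\sqrt t$. I would attempt this either via sharp super/subsolutions of the form $2\xi_0\sqrt{t+\tau}+\mathrm{const}$ (with $\tau$ absorbing the transient), inserted into Lemma \ref{comp2} and its lower-solution analogue, or via a Lyapunov-type functional for the self-similar Stefan problem; either way, feeding the resulting $O(1)$ bound back into the comparison above (or iterating it over a sequence $T\to\infty$) gives $h(t;u_0^1)\le h(t;u_0^2)+C$, and then the role-swap together with the reflection reduction yields the full statement for both $h$ and $g$.
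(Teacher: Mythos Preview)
The paper does not prove this lemma; it is quoted verbatim from \cite[Lemma~4.1]{DLZ} and used as a black box. So there is no proof here to compare against. That said, your proposal contains a genuine error, and it occurs earlier than the step you yourself flag as the obstacle.

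Your ``key observation'' is that a transition solution eventually satisfies $0\le u(t,x;u_0^i)\le\theta$ for all $x$, and hence reduces to the \emph{linear} one-phase Stefan problem for $t\ge T_0$. This is false. The paper shows explicitly at the start of \S3.1 (for the symmetric datum $\tilde u_0$, but the argument localizes) that $u(t,0)>\theta$ for \emph{every} $t\ge0$: if $u(t_0,0)\le\theta$ for some $t_0$, then $u(t_0,\cdot)\le\theta$ everywhere, the strong maximum principle gives $u(t,\cdot)<\theta$ for $t>t_0$, and vanishing rather than transition ensues. Your compactness argument can at best yield $\sup_x u(t,x)\to\theta$, i.e.\ $u<\theta+\delta$ eventually for every fixed $\delta>0$; it cannot produce $u\le\theta$ at any finite time. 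The set $\{u>\theta\}$ where $f(u)>0$ never becomes empty, so the reduction to a linear Stefan flow is unavailable, and with it the translation/scaling tricks you planned to use for the contraction step.

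This is precisely why the problem is delicate: the nonlinearity is active on a shrinking but non-vanishing core for all time, and the whole point of Theorem~\ref{theoremA} is that this core influences the front at the next order. The proof in \cite{DLZ} does not linearize; it relies on zero-number (intersection-comparison) arguments applied directly to the nonlinear equation, tracking the sign changes of the difference of two solutions (or of a solution against a shifted one) and using their non-increase in time to rule out unbounded relative drift of the fronts. If you want to reconstruct the argument, that is the machinery to reach for, not a stability estimate for the linear Stefan problem.
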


In light of the above lemma, to prove Theorem \ref{theoremA}, it suffices to deal with some special initial function.
Without loss of generality, we fix an initial datum $\tilde{u}_0$ to be symmetrically decreasing and
\begin{equation}\label{u02}
\tilde{u}_0(x)=\tilde{u}_0(-x)\ \ {\rm and}\ \ \tilde{u}_0'(x)<0,\ \ x\in(0, h_0],
\end{equation}
so that the transition case happens. Consequently, the solution $(u,g,h)$ with such initial datum  $\tilde{u}_0$ satisfies
\begin{align}\label{symm}
\begin{split}
&g(t)=-h(t)\ \ {\rm for}\  t>0, \\
&u(t,x)=u(t, -x)\ \ {\rm for} \ t>0,\  x\in [-h(t), h(t)], \\
&u_x(t,x)<0\ \ \ {\rm for}\ t\ge 0\ {\rm and} \ x\in (0, h(t)].
\end{split}
\end{align}
\begin{remark} We note that the above chosen initial datum $\tilde{u}_0$ may vary with respect to the nonlinearity $f$.
Thus in what follows whenever such $\tilde{u}_0$ is used, it should be understood that the function $f$ is fixed first.
\end{remark}

With the above chosen initial datum $\tilde{u}_0$, we now need to show that when $p>3$, it holds
\begin{align*}
2\xi_0\sqrt{t}+m t^{\frac{1}{2}-\frac{1}{p-1}}\le h(t)\le 2\xi_0\sqrt{t}+M t^{\frac{1}{2}-\frac{1}{p+1}},\ \ \mbox{as\ $t\to\infty$}
\end{align*}
for some positive constants $m$ and $M$ and when $1\le p\le 3$, it holds
\begin{align*}
2\xi_0\sqrt{t}+O(1)\le h(t)\le 2\xi_0\sqrt{t}+M t^{\frac{1}{2}-\frac{1}{p+1}} ,\ \ \mbox{as\ $t\to\infty$}.
\end{align*}

%In fact, we will prove the following proposition in the next section.
%\begin{proposition}\label{prop1}
%Suppose additionally that \eqref{f2} holds for $p\ge 1$ and $\sigma>0$ and that for an initial function $u_0(x)$ which satisfies \eqref{u0} and \eqref{u02}, transition case happens.
%Then, when $p>3$, there exists some positive
%constants $m$ and $M$ such that
%\begin{align*}
%2\xi_0\sqrt{t}+m t^{\frac{1}{2}-\frac{1}{p-1}}\le h(t)\le 2\xi_0\sqrt{t}+M t^{\frac{1}{2}-\frac{1}{p+1}}
%\end{align*}
%as $t\to\infty$ holds and then, when $1\le p\le 3$, we have
%\begin{align*}
%2\xi_0\sqrt{t}+o(1)\le h(t)\le 2\xi_0\sqrt{t}+M t^{\frac{1}{2}-\frac{1}{p+1}}+o(1).
%\end{align*}
%\end{proposition}

%By Proposition \ref{prop1} and Lemma \ref{DLZLa41}, the statement of Theorem \ref{theoremA} follows. Therefore it suffices to prove Proposition \ref{prop1} to obtain Theorem \ref{theoremA}.

%%%%%%%%%%%%%%%sec 3 Proof of main theorem %%%%%%

\section{Proof of the main theorem}

Throughout this section, we assume that $f$ is of combustion type and that \eqref{f2} holds for $p\ge 1$
and $\sigma>0$, and that $(u,g,h)$ is the solution of \eqref{problem} with the initial datum $\tilde{u}_0$.
\subsection{Upper bound estimate}

We first observe that $u(t,0)>\theta$ for all $t\ge 0$. Otherwise there exists $t_0>0$ such that $u(t_0,0)\le\theta$ and hence $u(t_0, x)<\theta$ for
$x\in [-h(t_0), h(t_0)]\backslash\{0\}$. By the strong maximum principle we easily deduce $u(t, x)<\theta$ for $t>t_0$ and $x\in [-h(t), h(t)]$, which in turn implies
$u(t,x)\to 0$ as $t\to\infty$, contradicting the assumption that the transition happens. Combining this and \eqref{symm}, we can see that for each $t\ge 0$, there exists a unique
$\theta(t)\in (0, h(t))$ such that
\begin{align}\label{thetat}
u(t, \theta(t))=\theta.
\end{align}

To obtain an upper bound estimate of $h(t)$, as a first step, we will derive an estimate for $\theta(t)$. For this purpose, given a constant $b\in (0, 1-\theta)$,
let us recall the property of the initial value problem:
\begin{align}\label{ivp}
V''+f(V)=0,\ V(0)=\theta+b,\ V'(0)=0.
\end{align}

Through a phase plane analysis, \eqref{ivp} has a unique solution $V_b$ and numbers $l(b)$, $L(b)\in (0, \infty)$ with $l(b)<L(b)$ such that
\begin{align}\label{defoflb}
\begin{split}
&V_b(L(b))=0,\ \ V_b(l(b))=\theta, \\
&V_b'(x)<0\ \ {\rm for}\ x\in(0, L(b)].
\end{split}
\end{align}
Furthermore, we have
\begin{lemma}\label{lb} The following assertions hold.
\begin{enumerate}[{\rm (i)}]
\item If $p>1$, then $l(b)$ is strictly decreasing in $b\in(0, \sigma)$.
\item If $p=1$, then $l(b)=\frac{\pi}{2}$ for $b\in(0,\sigma)$.
\item If $p\geq1$, then $L(b)$ is strictly decreasing in $b\in(0,\sigma)$.
\end{enumerate}
\end{lemma}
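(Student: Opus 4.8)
The plan is to work entirely with the conservation law (first integral) attached to the autonomous ODE \eqref{ivp}. Multiplying $V'' + f(V) = 0$ by $V'$ and integrating from $0$ to $x$ gives $\tfrac12 V_b'(x)^2 + F(V_b(x)) = F(\theta+b)$, where $F(s) := \int_0^s f(r)\,dr$. Since $V_b' < 0$ on $(0, L(b)]$, we may solve for $V_b'$ and separate variables; this yields the explicit integral representations
\[
l(b) = \int_{\theta}^{\theta+b}\frac{dV}{\sqrt{2\,[F(\theta+b)-F(V)]}},\qquad
L(b) = \int_{0}^{\theta+b}\frac{dV}{\sqrt{2\,[F(\theta+b)-F(V)]}}.
\]
Here I use $F(\theta) = 0$ (because $f \equiv 0$ on $[0,\theta]$, so $F$ vanishes on $[0,\theta]$), which also shows $F(V) \equiv 0$ for $V \in [0,\theta]$ and hence justifies, together with the combustion/\eqref{f1} assumptions, that the integrands are positive and the improper integrals at the endpoints converge (the singularity at $V = \theta+b$ is an inverse-square-root type since $F'(\theta+b) = f(\theta+b) > 0$).

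For part (ii), when $p=1$ we have $f(u) = u-\theta$ on $[\theta,\theta+\sigma)$, so on the relevant range $F(\theta+b) - F(V) = \tfrac12[(\theta+b-\theta)^2 - (V-\theta)^2] = \tfrac12[b^2 - (V-\theta)^2]$ after the substitution; then $l(b) = \int_\theta^{\theta+b} dV/\sqrt{b^2 - (V-\theta)^2} = \int_0^b ds/\sqrt{b^2 - s^2} = \pi/2$ independently of $b$, which is the claim. For parts (i) and (iii), the natural move is to rescale. Substituting $V = \theta + b\,w$ in the formula for $l(b)$ and using $f(u) = (u-\theta)^p$ on $[\theta,\theta+\sigma)$, one gets $F(\theta+b) - F(\theta+bw) = \tfrac{b^{p+1}}{p+1}\,(1 - w^{p+1})$, so that
\[
l(b) = \int_0^1 \frac{b\,dw}{\sqrt{\tfrac{2 b^{p+1}}{p+1}(1-w^{p+1})}} = b^{\frac{1-p}{2}}\sqrt{\tfrac{p+1}{2}}\int_0^1\frac{dw}{\sqrt{1-w^{p+1}}},
\]
which is a constant times $b^{(1-p)/2}$; since $p > 1$ this exponent is negative, so $l(b)$ is strictly decreasing in $b$. (This also re-confirms (ii): the exponent is $0$ when $p=1$, and the constant evaluates to $\pi/2$.) I expect the main obstacle to be (iii), because the integral for $L(b)$ splits as $L(b) = l(b) + \int_0^\theta dV/\sqrt{2[F(\theta+b) - F(V)]}$ and on $[0,\theta]$ the function $F$ is genuinely determined by the global combustion nonlinearity, not just by \eqref{f2}; on that piece $F(V) = 0$, so the extra term is $\int_0^\theta dV/\sqrt{2F(\theta+b)} = \theta/\sqrt{2F(\theta+b)}$, and strict monotonicity of $L$ then reduces to showing $F(\theta+b)$ is strictly increasing in $b$ — which is immediate since $F'(\theta+b) = f(\theta+b) > 0$ for $b \in (0,\sigma) \subset (0, 1-\theta)$ — combined with the monotonicity (or constancy) of $l(b)$ already established. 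So $L(b)$ is a sum of a nonincreasing term ($l(b)$, strictly decreasing if $p>1$, constant if $p=1$) and the strictly decreasing term $\theta/\sqrt{2F(\theta+b)}$, hence strictly decreasing for all $p \ge 1$.

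One technical point to dispatch carefully: the representations above presuppose that $\theta+b < 1$ (so that $f > 0$ along the whole descent and $V_b' \neq 0$ until $V_b$ reaches $0$), which holds since $b \in (0,\sigma)$ and $\sigma$ is small; and that $l(b), L(b)$ are well-defined and finite, which follows from the convergence discussion above. I would also remark that differentiability of $l(b), L(b)$ in $b$ is not actually needed — monotonicity can be read off directly from the scaling identity and from monotonicity of $F(\theta+b)$ — which keeps the argument clean and avoids differentiating under an improper integral sign. With these pieces assembled, all three assertions follow.
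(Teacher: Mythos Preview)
Your proposal is correct and follows essentially the same approach as the paper: both derive the first integral, obtain the explicit formula $l(b)=C_p\,b^{(1-p)/2}$ via the same rescaling, and arrive at $L(b)=l(b)+\theta/\sqrt{2F(\theta+b)}$. The only cosmetic difference is that the paper obtains the second term by noting that $V_b$ is linear on $[l(b),L(b)]$ (since $f\equiv 0$ there) and reading off the slope $V_b'(l(b))=-\sqrt{2F(\theta+b)}$, whereas you extract it directly from the integral representation using $F\equiv 0$ on $[0,\theta]$; the two computations are equivalent.
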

\begin{proof}
By virtue of the equation for $V_b$, direct calculation gives
\begin{align*}
&-V_b''V_b'=f(V_b)V_b', \\
&\left(-\frac{(V_b')^2}{2}\right)'=(F(V_b))',
\end{align*}
where
 $$
 F(u)=\int_0^u f(s)ds.
 $$
By integrating we then obtain
 $$
 -\frac{V_b'(x)^2}{2}=F(V_b(x))-F(V_b(0)),
 $$
and so
 $$
 V_b'(x)^2=2[F(b+\theta)-F(V_b(x))],
 $$
that is,
 \begin{align}
 \frac{V_b'(x)}{\sqrt{2[F(b+\theta)-F(V_b(x))]}}=-1. \label{Vb0}
 \end{align}
Integrating the above identity over $[0, l(b)]$, we have
\begin{align*}
&\int_0^{l(b)}\frac{V_b'(x)}{\sqrt{2[F(b+\theta)-F(V_b(x))]}}dx=-l(b),\\
&\int_{b+\theta}^{\theta}\frac{ds}{\sqrt{2[F(b+\theta)-F(s)]}}=-l(b), \\
&\int_0^b\frac{ds}{\sqrt{G(b)-G(s)}}=l(b),
\end{align*}
where
 $$
 G(u)=2\int_0^u f(\theta+s)ds.
 $$

For $0<u<\sigma$ and $p>1$, it follows that
\begin{align}\label{Gu}
G(u)=2\int_0^u f(\theta+s)ds=2\int_0^u s^pds=\frac{2}{p+1}u^{p+1}
\end{align}
and
\begin{align}
l(b)&=\int_0^b\frac{ds}{\sqrt{\frac{2}{p+1}(b^{p+1}-s^{p+1})}}=\frac{\sqrt{p+1}}{\sqrt{2}b^{\frac{p+1}{2}}}\int_0^b \frac{ds}{\sqrt{1-\left(\frac{s}{b}\right)^{p+1}}} \nonumber \\
    &=\frac{\sqrt{p+1}}{\sqrt{2}b^{\frac{p+1}{2}}}b\int_0^1\frac{dx}{\sqrt{1-x^{p+1}}}=C_{p}b^{-\frac{p-1}{2}}, \label{lbest1}
\end{align}
where
 $$
 C_{p}=\sqrt{\frac{p+1}{2}}\int_0^1\frac{dx}{\sqrt{1-x^{p+1}}}.
 $$
This implies that $l(b)$ is strictly decreasing in $b\in (0, \sigma)$ if $p>1$.

When $p=1$, $l(b)=\frac{\pi}{2}$ is an obvious fact.

We next investigate $L(b)$. As $V_b(x)$ is a linear function over $[l(b), L(b)]$, we have
\begin{align*}
-V_b'(l(b))=\frac{\theta}{L(b)-l(b)}.
\end{align*}
This gives
\begin{align}\label{Lb2}
L(b)=l(b)-\frac{\theta}{V_b'(l(b))}.
\end{align}
By \eqref{Vb0} we know $V_b'(l(b))=-\sqrt{G(b)}$. Thus, from \eqref{Lb2} and \eqref{Gu} it follows
\begin{align}\label{Lb3}
L(b)=C_{p}b^{-\frac{p-1}{2}}+\sqrt{\frac{p+1}{2}}\theta b^{-\frac{p+1}{2}}\ \ \mbox{if}\;p>1,
\end{align}
and
$$L(b)=\frac{\pi}{2}+\theta b^{-1}\ \ \mbox{if}\;p=1.$$
Hence, $L(b)$ is strictly decreasing in $b\in (0, \sigma)$.
\end{proof}

\begin{lemma}[\mbox{\cite[Lemma 4.3]{DLZ}}]\label{DLZLa43}
If $0<b<\frac{1-\theta}{2}$, then $L(b)\to +\infty$ if and only if $b\to0$.
\end{lemma}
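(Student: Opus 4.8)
The plan is to read off the behavior of $L(b)$ as $b\to 0$ directly from the integral/closed-form expressions already obtained in the proof of Lemma \ref{lb}, and to argue the reverse implication by monotonicity. First, I would recall from \eqref{Lb3} that for $p>1$ one has the explicit formula
\begin{align*}
L(b)=C_p b^{-\frac{p-1}{2}}+\sqrt{\tfrac{p+1}{2}}\,\theta\, b^{-\frac{p+1}{2}},
\end{align*}
while for $p=1$ one has $L(b)=\tfrac{\pi}{2}+\theta b^{-1}$. In either case the dominant term is the one carrying the factor $\theta$, with a negative power of $b$; hence $L(b)\to+\infty$ as $b\to 0^+$. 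This gives the ``if'' direction for the range of $b$ on which \eqref{f2} applies; I should note, however, that Lemma \ref{lb} only established formula \eqref{Lb3} for $b\in(0,\sigma)$, where the reaction term has the pure power form \eqref{f2}. For the full hypothesis $0<b<\frac{1-\theta}{2}$ I would instead work from the general expression
\begin{align*}
L(b)=l(b)-\frac{\theta}{V_b'(l(b))}=\int_0^b\frac{ds}{\sqrt{G(b)-G(s)}}+\frac{\theta}{\sqrt{G(b)}},
\end{align*}
valid for all relevant $b$, where $G(u)=2\int_0^u f(\theta+s)\,ds$ (see \eqref{Vb0}–\eqref{Lb2}). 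Since $f(\theta+s)>0$ for $s\in(0,1-\theta)$, $G$ is positive and strictly increasing on $(0,1-\theta)$, so $G(b)\to 0^+$ as $b\to 0^+$; therefore the boundary term $\theta/\sqrt{G(b)}\to+\infty$, and since $l(b)\ge 0$ we conclude $L(b)\to+\infty$. This proves: $b\to 0$ implies $L(b)\to+\infty$.

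For the converse, ``$L(b)\to+\infty$ only if $b\to 0$'', I would use the monotonicity statement in Lemma \ref{lb}(iii): $L(b)$ is strictly decreasing in $b$ (at least on $(0,\sigma)$; on the larger range $(0,\frac{1-\theta}{2})$ the same monotonicity follows from the formula for $L(b)$ above together with the monotonicity of $l(b)$ and of $1/\sqrt{G(b)}$, using that $G$ is strictly increasing). A strictly decreasing function that tends to $+\infty$ along some sequence $b_n$ must have $b_n\to 0$: if instead $b_n\ge\delta>0$ for infinitely many $n$, then $L(b_n)\le L(\delta)<\infty$ along that subsequence, a contradiction. More simply, since $L$ is strictly decreasing and continuous, $L(b)\to L(b_0)<\infty$ whenever $b\to b_0>0$, so the only way to have $L(b)\to+\infty$ is $b\to 0$. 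This completes both directions.

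The only genuine subtlety — not really an obstacle — is the mismatch between the range $b\in(0,\sigma)$ where the clean closed forms \eqref{lbest1}, \eqref{Lb3} hold and the range $0<b<\frac{1-\theta}{2}$ in the statement; the resolution, as above, is to fall back on the integral representation $L(b)=l(b)+\theta/\sqrt{G(b)}$ with only the qualitative facts that $G>0$ is strictly increasing on $(0,1-\theta)$ and $l(b)\ge 0$. Everything else is immediate. (In fact for the applications one typically only needs the limit $\lim_{b\to 0^+}L(b)=+\infty$, so one could even restrict attention to small $b$ and invoke \eqref{Lb3} directly, but I would state it in the generality given.)
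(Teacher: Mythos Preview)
The paper does not supply its own proof of this lemma; it is simply quoted from \cite[Lemma 4.3]{DLZ}, so there is no in-paper argument to compare against. That said, your proposal is essentially correct and self-contained, with one point to tighten.

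Your ``if'' direction is clean: the representation $L(b)=l(b)+\theta/\sqrt{G(b)}$ together with $G(b)\to 0^+$ as $b\to 0^+$ and $l(b)\ge 0$ gives $L(b)\to+\infty$ immediately, and this argument uses only the qualitative properties of $G$ valid for any combustion $f$.

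For the ``only if'' direction you invoke strict monotonicity of $L$ on all of $(0,\tfrac{1-\theta}{2})$, and justify it by appealing to monotonicity of $l(b)$ on that full range. But Lemma \ref{lb} only establishes monotonicity of $l$ (and of $L$) on $(0,\sigma)$, where the explicit power form \eqref{f2} is in force; for a general combustion nonlinearity $l(b)$ need not be monotone on the larger interval, so this step is unjustified as written. The fix is easy and you almost state it yourself: monotonicity is unnecessary, continuity suffices. Since $L$ is continuous on $(0,1-\theta)$ (by continuous dependence of $V_b$ on $b$ and $V_b'(L(b))<0$), it is bounded on every compact subinterval $[\delta,\tfrac{1-\theta}{2}]\subset(0,1-\theta)$. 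Hence if $b_n\in(0,\tfrac{1-\theta}{2})$ satisfies $L(b_n)\to+\infty$ but $b_n\not\to 0$, pass to a subsequence bounded below by some $\delta>0$ to get a contradiction. Replace the monotonicity sentence with this compactness argument and the proof is complete.
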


As one will see below, for any fixed $t\ge 1$, the sign-changing pattern of the function $u(t,x)-V_b(x)$ becomes crucial in obtaining a refined estimate for $\theta(t)$.
Indeed, such kind of properties were studied in Lemmas 4.6--4.8 in \cite{DLZ}.

By Lemma 4.5 of \cite{DLZ}, there exists $\delta_0>0$ such that for each $b\in(0,\delta_0)$,
 $$w_b(t,x):=u(t, x)-V_b(x)$$ satisfies $$w_b(1,0)>0>w_b(1, h(1))$$ and has a
 unique zero in $[0, h(1)]$, and the zero is nondegenerate.

As $t\mapsto w_b(t,x)$, $t\mapsto (w_b)_x(t,x)$, and $t\mapsto h(t)$ are all continuous uniformly in $x$, we see that
 for each $b\in(0, \delta_0)$ there exists $\epsilon_0>0$ small such that for each fixed $t\in[1-\epsilon_0, 1+\epsilon_0]$, $w_b(t,x)$ satisfies
  $$w_b(t, 0)>0>w_b(t, h(t)),$$ and has
 a unique zero in $[0, h(t)]$ which is nondegenedete.

We define
\begin{align*}
&T_1^{b}:=\sup\{s: w_b(t,0)>0\ \ {\rm for}\  t\in [1-\epsilon_0, s)\}, \\
&T_2^{b}:=\sup\{s: h(t)<L(b)\ \ {\rm for}\ t\in[1-\epsilon_0, s)\}.
\end{align*}
Clearly $T_1^{b}, T_2^{b}\ge 1+\epsilon_0$. Due to $h(t)\to +\infty$ and $w_b(t,0)\to -b<0$ as $t\to\infty$, both $T_1^{b}$ and $T_2^{b}$ are finite.

Now we are ready to present the estimate of $\theta(t)$ in the following lemma.

\begin{lemma}\label{thetatupper}
Let $(u,g,h)$ be the solution of \eqref{problem} with the initial datum $\tilde{u}_0$. Then there exists $M>0$ such that
\begin{align*}
\theta(t)\le Mt^{\frac{1}{2}-\frac{1}{p+1}}.
\end{align*}
\end{lemma}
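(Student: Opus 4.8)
The strategy is to compare $u(t,\cdot)$ with the family of stationary profiles $V_b$ from \eqref{ivp} and to read off the location of the level $\{u=\theta\}$ — i.e.\ the point $\theta(t)$ — from the intersection pattern of $u(t,\cdot)$ and $V_b$. Fix a small $b\in(0,\delta_0)$ and set $\tau_b:=\min\{T_1^b,T_2^b\}$. By Lemma~4.5 of \cite{DLZ} and the construction recalled above, on $[1-\epsilon_0,\tau_b)$ the function $w_b(t,\cdot)=u(t,\cdot)-V_b$ satisfies $w_b(t,0)>0>w_b(t,h(t))$, $h(t)<L(b)$, and has exactly one nondegenerate zero $z_b(t)\in(0,h(t))$, with $w_b(t,\cdot)>0$ on $[0,z_b(t))$ and $w_b(t,\cdot)<0$ on $(z_b(t),h(t)]$. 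Because $V_b$ is strictly decreasing on $[0,L(b)]$ with $V_b(l(b))=\theta$, one has the implication $z_b(t)\le l(b)\ \Rightarrow\ u(t,l(b))\le\theta\ \Rightarrow\ \theta(t)\le l(b)$, while trivially $\theta(t)<h(t)\le l(b)$ whenever $h(t)\le l(b)$. Thus the whole task reduces to producing, for each small $b$, a nonempty time window on which $w_b(t,\cdot)\le0$ on $[0,h(t)]$.

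First I would analyse the configuration at the checkpoint time $\tau_b$ using the parabolic zero--number results of \cite[Lemmas~4.6--4.8]{DLZ}. If $\tau_b=T_1^b$, then $w_b(\tau_b,0)=0$; since $w_b$ is even in $x$ and $\partial_x w_b(t,0)=u_x(t,0)-V_b'(0)=0$ for every $t$, the interior zero $z_b(t)$ must be absorbed at $x=0$ as $t\uparrow T_1^b$, and the zero--number cannot increase afterward, so $w_b(t,\cdot)<0$ on $[0,h(t)]$ for $t$ slightly beyond $T_1^b$ — giving $\theta(t)\le l(b)$ there. If instead $\tau_b=T_2^b$, then $h(t)$ reaches $L(b)$, the profile $V_b$ hits $0$ at the moving front, and I would control the sign of $w_b$ just inside the boundary by comparing $h'(T_2^b)$ with $-\mu V_b'(L(b))=\mu\sqrt{G(b)}$ via the Hopf lemma applied to $w_b$, in the spirit of the proof of Lemma~\ref{comp2}. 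Either way the output should be a window $\mathcal{I}_b$ around $\tau_b$ on which $\theta(t)\le l(b)$; and since $T_1^b$ and $T_2^b$ are monotone in $b$ and tend to $\infty$ as $b\downarrow0$ (because $u(t,0)\to\theta$ and $h(t)\to\infty$), the family $\{\mathcal{I}_b\}_{b\in(0,\delta_0)}$ should cover all sufficiently large $t$.

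Finally I would calibrate $b$ against $t$. By Lemma~\ref{lb} and Lemma~\ref{DLZLa43}, $L:(0,\delta_0)\to(L(\delta_0),\infty)$ is a strictly decreasing continuous bijection with $L(b)=C_pb^{-(p-1)/2}+\sqrt{(p+1)/2}\,\theta\,b^{-(p+1)/2}\sim\sqrt{(p+1)/2}\,\theta\,b^{-(p+1)/2}$ as $b\to0$, while $l(b)=C_pb^{-(p-1)/2}$ if $p>1$ and $l(b)\equiv\pi/2$ if $p=1$. Given $t$ large, define $b_t$ by $L(b_t)=h(t)$; by Theorem~\ref{DLZthm12}, $h(t)=2\xi_0\sqrt t\{1+o(1)\}$, so inverting the leading-order relation yields $b_t=c_p\,t^{-1/(p+1)}\{1+o(1)\}$ for an explicit constant $c_p>0$, and therefore
\[
l(b_t)=C_p b_t^{-(p-1)/2}=M_0\,t^{\frac{p-1}{2(p+1)}}\{1+o(1)\}=M_0\,t^{\frac12-\frac1{p+1}}\{1+o(1)\}
\]
for $p>1$ (using $\tfrac12-\tfrac1{p+1}=\tfrac{p-1}{2(p+1)}$), and $l(b_t)\equiv\pi/2$ for $p=1$. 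Combining this with the previous step — which must also verify that the chosen $t$ lies in $\mathcal{I}_{b_t}$, i.e.\ that $t$ sits correctly between $T_1^{b_t}$ and $T_2^{b_t}=t$ — gives $\theta(t)\le l(b_t)\le M\,t^{\frac12-\frac1{p+1}}$ for all large $t$; enlarging $M$ to absorb the bounded remaining range of $t$ finishes the proof.

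The main obstacle is the step producing the windows $\mathcal{I}_b$: the equivalences ``$\theta(t)\le l(b)$'' $\Leftrightarrow$ ``$z_b(t)\le l(b)$'' $\Leftrightarrow$ ``$w_b(t,l(b))\le0$'' are circular, so the estimate cannot be extracted from the static intersection picture and one must genuinely exploit the parabolic evolution — the decrease of the zero number together with Hopf's lemma and the free-boundary condition — to pin down the sign of $w_b$ just after $\tau_b$ and to show the resulting windows exhaust the large times. A closely related subtlety is the calibration $L(b_t)=h(t)$ and the check $t\in\mathcal{I}_{b_t}$, which rely on the sharp law $h(t)=2\xi_0\sqrt t\{1+o(1)\}$ of Theorem~\ref{DLZthm12}, not merely on $h(t)\to\infty$.
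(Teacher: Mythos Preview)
Your calibration step (the last paragraph, matching the paper's Claim~2) is correct and is exactly what the paper does: define $b_t$ by $L(b_t)=h(t)$, use Theorem~\ref{DLZthm12} and \eqref{Lb3} to get $b_t\sim c_p t^{-1/(p+1)}$, and then read off $l(b_t)=O(t^{1/2-1/(p+1)})$ from \eqref{lbest1}. The issue is entirely in the step you yourself flag as ``the main obstacle'': producing, for \emph{every} large $t$, a parameter $b$ with $\theta(t)\le l(b)$ and $l(b)$ of the right size.

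Your dynamical ``window'' scheme has a real gap in the case $\tau_b=T_2^b<T_1^b$. At $t=T_2^b$ you have $h(t)=L(b)$ and $w_b(t,0)>0$; the Hopf argument you suggest compares $h'(T_2^b)$ with $-\mu V_b'(L(b))$ and tells you the sign of $w_b$ \emph{just inside the free boundary}, but it does not pin down where the interior zero $z_b(t)$ sits relative to $l(b)$. For $t$ slightly beyond $T_2^b$ one has $h(t)>L(b)$ and $w_b(t,h(t))>0$, so the pattern becomes $[+0-0+]$ with two zeros, and nothing in your outline forces the first zero to lie in $[0,l(b)]$. The covering claim ``$\{\mathcal I_b\}$ exhausts large times'' then has no content, because you have not exhibited a single $t$ in $\mathcal I_b$ when $\tau_b=T_2^b$, and that is precisely the branch your calibration $L(b_t)=h(t)$ lands you on.

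The paper avoids the whole window/coverage apparatus by arguing \emph{at a fixed time and sliding in $b$} rather than evolving in $t$. For a given large $t_0$, set $b_0=b(t_0)$ with $L(b_0)=h(t_0)$ and suppose for contradiction that $\theta(t_0)>l(b_0)$. Then $w_{b_0}(t_0,\cdot)$ is positive at $l(b_0)$ and, by \cite[Lemmas~4.6--4.8]{DLZ}, negative just inside $L(b_0)$, forcing the pattern $[+0-0]$ and hence $u(t_0,\cdot)>V_{b_0}$ on $[0,l(b_0)]$. Now increase $b$: define $b_*=\sup\{b>b_0:\ u(t_0,\cdot)\ge V_b\ \text{on}\ [0,l(b)]\}$. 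Using the pattern $[+0-0+]$ that \cite{DLZ} gives whenever $L(b)<h(t_0)$ and $w_b(t_0,0)>0$, one shows $b_*=u(t_0,0)-\theta$; but then $w_{b_*}(t_0,0)=0$, and \cite{DLZ} forces $u(t_0,\cdot)<V_{b_*}$ near $0$, contradicting $b_*\in\overline A$. This gives $\theta(t_0)\le l(b(t_0))$ for every large $t_0$ with no coverage argument needed. I recommend you replace your window construction with this static $b$-sliding argument; your final calibration paragraph can then be kept verbatim.
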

\begin{proof}
Let $\sigma_1\in (0, \sigma)$ be arbitrarily given. By Lemma \ref{lb}, for any $h\in(L(\sigma_1), +\infty)$, there exists a unique $b\in (0, \sigma_1)$ such that $L(b)=h$.
By the fact $h(t)\to \infty$ as $t\to\infty$, there exists $T_0>0$ such that
\begin{align*}
h(t)>L(\sigma_1)\ \ \ {\rm for}\ t\ge T_0.
\end{align*}
Hence for any $t\ge T_0$, there exists a unique $b(t)\in (0, \sigma_1)$ such that
\begin{align*}
h(t)=L(b(t))\ \ \ {\rm for}\ t\ge T_0.
\end{align*}

{\bf Claim 1.} $\theta(t)\le l(b(t))$ for all large $t$.

Since $\lim_{t\to\infty}u(t,x)=\theta$ locally uniformly in $\mathbb{R}^1$, there exists $T_0'>T_0$ such that
\begin{align}\label{claim1-1}
u(t,0)<\theta+\sigma_1\ \ \ {\rm for}\ t\ge T_0'.
\end{align}
It is sufficient to prove that
 $$
 \mbox{$\theta(t)\le l(b(t))$ \  \ for\ $t\ge T_0'$.}
 $$

We proceed indirectly and suppose that there exists $t_0>T_0'$ such that $\theta(t_0)>l(b(t_0))=:l(b_0)$.
In view of $h(t_0)=L(b_0)$ and Lemmas 4.6--4.8 of \cite{DLZ}, we know that
\begin{align*}
u(t_0, x)-V_{b_0}(x)<0\ \ {\rm for}\ x<L(b_0)\  {\rm but\, close\,to}\ L(b_0).
\end{align*}
Recalling that $\theta(t_0)>l(b_0)$, it is easily seen that
\begin{align}\label{claim1-2}
u(t_0, l(b_0))-V_{b_0}(l(b_0))>0
\end{align}
Hence $u(t_0, x)-V_{b_0}(x)$ has a zero in $(l(b_0), L(b_0))$. This case can only happen in (ii) of Lemma 4.7 in \cite{DLZ}.
Then $u(t_0, x)-V_{b_0}(x)$ has the sign-changing pattern of $[+0-0]$. This and \eqref{claim1-2} imply that
\begin{align*}
&u(t_0, x)-V_{b_0}(x)>0\ \ \ {\rm for}\  0\le x<l(b_0),\\
&u(t_0, 0)-b_0-\theta>0.
\end{align*}

Define
\begin{align*}
&A:=\{b>b_0 : u(t_0,x)\ge V_{b}(x)\ge \theta\ \ {\rm in}\ [0, l(b)]\}, \\
&b_*:=\sup A.
\end{align*}
First we show that set $A$ is nonempty. We already have
\begin{align*}
u(t_0, x)>V_{b_0}(x)\ge\theta\ \ \ {\rm for}\  x\in[0, l(b_0)].
\end{align*}
Since $l(b)$ is nonincreasing in $b$ (due to Lemma \ref{lb}) and $V_{b}$ is continuous with respect to $b$, we obtain
\begin{align*}
u(t_0, x)>V_{b_2}(x)\ge\theta\ \ \ {\rm for}\ x\in [0, l(b_2)]
\end{align*}
for $b_2>b_0$ but close to $b_0$. This implies that $A$ is nonempty. In addition, from \eqref{claim1-1} for all $b\in A$, it follows
\begin{align*}
u(t_0, 0)\ge b+\theta\ \ {\rm that\ is,}\ \ b\le u(t_0, 0)-\theta\le\sigma_1.
\end{align*}
Then $b_*\le u(t_0, 0)-\theta\le\sigma_1$. Therefore $b_*$ is well-defined. In the sequel, we further claim that
 $$b_*=u(t_0, 0)-\theta.$$

Suppose that $b_*<u(t_0, 0)-\theta$. Then $V_{b_{*}}(0)<u(t_{0},0)$, that is, $w_{b_{*}}(t_{0}, 0)>0$. Note that $b^{*}>b_{0}$ and $L(b)$ is decreasing in $b$. So
 $$L(b_*)<L(b_0)=L(b(t_{0}))=h(t_{0}).$$
Then Lemmas 4.6--4.8 of \cite{DLZ} imply that $$T_2^{b_*}<t_0<T_1^{b_*}.$$

By Lemma 4.7 (iii) in \cite{DLZ}, $u(t_0, x)-V_{b_*}(x)$ has the sign-changing pattern $[+0-0+]$. We then conclude that
 $$
 u(t_0,x)>V_{b_*}(x)\ \ \ \mbox{for}\  x\in [0, l(b_*)].
 $$
For any $b\in(b_0, b_*)$, it follows that
\begin{align*}
u(t_0, x)\ge V_{b}(x)\ \ {\rm for\ all}\ x\in [0, l(b_*)]\subset [0, l(b)].
\end{align*}
Letting $b\nearrow b_*$ gives
\begin{align*}
u(t_0, x)\ge V_{b_*}(x)\ \ {\rm for}\ x\in [0, l(b_*)].
\end{align*}
Noting that
 $$
 V_{b_*}(l(b_*))=V_{b_0}(l(b_0))=\theta\ \ \mbox{and}\ \ u_x(t,x)<0,
 $$
we apply \eqref{claim1-2} and Lemma \ref{lb} to assert that
$$u(t,l(b_*))-V_{b_*}(l(b_*))\geq u(t,l(b_0))-V_{b_0}(l(b_0))>0.$$
Since $u(t_0, x)-V_{b_*}(x)$ has the sign-changing pattern $[+0-0+]$, there holds
\begin{align*}
u(t_0, x)>V_{b_*}(x)\ \ {\rm for}\ x\in [0, l(b_*)].
\end{align*}
By continuity of $V_b$ with respect to $b$, one can find $\varepsilon>0$ such that
\begin{align*}
u(t_0, x)>V_{b_*+\varepsilon}(x)\ \ {\rm for}\ x\in [0, l(b_*+\varepsilon)].
\end{align*}
This contradicts the definition of $b_*$. So we have $b_*=u(t_0, 0)-\theta$, that is, $u(t_0, 0)=V_{b_*}(0)$ and
\begin{align*}
u(t_0, x)\ge V_{b_*}(x)\ \ {\rm for}\ x\in [0, l(b_*)].
\end{align*}
However, by Lemmas 4.6--4.8 of \cite{DLZ}, if $u(t_0, x)-V_{b_*}(x)$ has a tangency at $x=0$, then $u(t_0,x)<V_{b_*}(x)$ in $(0,\delta)$ for some small $\delta>0$.
This is a contradiction. Thus we have shown that $\theta(t)\le l(b(t))$ for all large $t$.

{\bf Claim 2.} $\theta(t)\le Mt^{\frac{1}{2}-\frac{1}{p+1}}$ for all large $t$.

In the case of $p=1$, we have $l(b)=\frac{\pi}{2}$ by \eqref{lbest1}.

Now we handle the situation of $p>1$. By Theorem 1.2 in \cite{DLZ}(see Theorem \ref{DLZthm12}) and \eqref{Lb3}
\begin{align}
&h(t)=2\xi_0\sqrt{t}(1+o(1)), \label{Lbest1}\\
&h(t)=L(b(t))=C_{p}b(t)^{-\frac{p-1}{2}}+\sqrt{\frac{p+1}{2}}\theta b(t)^{-\frac{p+1}{2}}. \label{Lbest2}
\end{align}
By Lemma \ref{DLZLa43},
\begin{align*}
b\to 0\ \ {\rm if\ and\ only\ if}\ L(b)\to\infty.
\end{align*}
Since $h(t)\to\infty$ as $t\to\infty$, one further gets
\begin{align*}
b(t)\to 0,\ \ {\rm as}\ t\to\infty.
\end{align*}
From \eqref{Lbest1} and \eqref{Lbest2}, we have
\begin{align*}
2\xi_0\sqrt{t}(1+o(1))=b(t)^{-\frac{p+1}{2}}\sqrt{\frac{p+1}{2}}\theta(1+o(1)),\ \ {\rm as}\ t\to\infty
\end{align*}
and in turn
\begin{align*}
\frac{1}{\{t^{\frac{1}{p+1}}b(t)\}^{\frac{p+1}{2}}}=\frac{2\xi_0}{\theta}\sqrt{\frac{2}{p+1}}(1+o(1)),\ \ {\rm as}\ t\to\infty
\end{align*}
from which we obtain
\begin{align*}
b(t)=O(t^{-\frac{1}{p+1}}),\ \ {\rm as}\ t\to\infty.
\end{align*}
This and \eqref{lbest1} yield
\begin{align*}
&l(b(t))=O(t^{\frac{1}{2}-\frac{1}{p+1}}),\ \ {\rm as}\ t\to\infty.
\end{align*}
Therefore there exists $M>0$ such that
\begin{align*}
\theta(t)\le l(b(t))\le Mt^{\frac{1}{2}-\frac{1}{p+1}}\ \ \mbox{for all large}\ t.
\end{align*}

\end{proof}
Consider the Stefan problem
\begin{align}\label{Stefan0}
\left\{
\begin{array}{ll}
\rho_t-\rho_{xx}=0,                 &t>0, 0<x<r(t), \\
\rho(t,0)=\theta,\ \rho(t, r(t))=0, &t>0, \\
r'(t)=-\mu\rho_x(t, r(t)), &t>0.
\end{array}\right.
\end{align}
It is easily seen that the solution to \eqref{Stefan0} is given by
\begin{align}\label{SolofStefan}
\rho(t, x)=\theta\left[1-\frac{E\left(\frac{x}{2\sqrt{t}}\right)}{E(\xi_0)}\right],\  \
r(t)=2\xi_0\sqrt{t},
\end{align}
where
 $$E(x)=\frac{2}{\sqrt{\pi}}\int_0^x e^{-s^2}ds.$$

By utilizing the explicit solution of the Stefan problem \eqref{Stefan0}
to construct a suitable upper solution of \eqref{problem}, we can obtain the upper estimate of $h(t)$. Specifically, we can state
%\begin{align*}
%\rho(t, x)=\theta\left[1-\frac{E\left(\frac{x}{2\sqrt{t}}\right)}{E(\xi_0)}\right],\ r(t)=2\xi_0\sqrt{t}.
%\end{align*}
\begin{lemma}\label{l3.4}
If $p>1$ and let $(u,g,h)$ be the solution of \eqref{problem} with the initial datum $\tilde{u}_0$, we have
\begin{align*}
h(t)\le 2\xi_0\sqrt{t}+M t^{\frac{1}{2}-\frac{1}{p+1}},\ \ \ {\rm as}\ t\to\infty.
\end{align*}
\end{lemma}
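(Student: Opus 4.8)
The plan is to construct an explicit upper solution of \eqref{problem} out of the self-similar solution \eqref{SolofStefan} of the Stefan problem \eqref{Stefan0}, rigidly translated so that its left endpoint always lies to the right of the level line $\{u=\theta\}$, and then to invoke the comparison principle of Lemma \ref{comp2}. Set $\alpha:=\tfrac12-\tfrac1{p+1}\in(0,\tfrac12)$ (here $p>1$ is used). By Lemma \ref{thetatupper} there are $M_1,T_1>0$ with $\theta(t)\le M_1t^{\alpha}$ for $t\ge T_1$; since the function $\theta(\cdot)$, defined by \eqref{thetat} and continuous because $u_x<0$ by \eqref{symm}, is bounded on $[0,T_1]$, one can fix $M_2>0$ so large that the smooth, nondecreasing curve
\begin{equation*}
\eta(t):=h_0+M_2\bigl[(t+1)^{\alpha}-1\bigr],\qquad t\ge0,
\end{equation*}
satisfies $\eta(t)\ge\theta(t)$ for all $t\ge0$, $\eta(0)=h_0$, $\eta'\ge0$, and $\eta(t)\le M_3t^{\alpha}$ for large $t$.

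With $\tau>0$ arbitrary, I would then set $\overline h(t):=\eta(t)+2\xi_0\sqrt{t+\tau}$ and
\begin{equation*}
\overline u(t,x):=\theta\left[1-\frac{E\!\left(\frac{x-\eta(t)}{2\sqrt{t+\tau}}\right)}{E(\xi_0)}\right],\qquad \eta(t)\le x\le\overline h(t),
\end{equation*}
so that $\overline u(t,x)=\rho(t+\tau,x-\eta(t))$ with $\rho$ as in \eqref{SolofStefan}; in particular $0\le\overline u\le\theta$ on its domain, hence $f(\overline u)\equiv0$ there. The hypotheses of Lemma \ref{comp2} with $\xi(t)=\eta(t)$ would be checked directly: (i) since $\rho$ solves the heat equation, $\overline u_t-\overline u_{xx}=-\eta'(t)\,\overline u_x\ge0=f(\overline u)$, using $\overline u_x<0$ and $\eta'\ge0$; (ii) $\overline u(t,\eta(t))=\theta$, which is $\ge0$, and is $\ge u(t,\eta(t))$ whenever $\eta(t)<h(t)$, because then $0<\theta(t)\le\eta(t)<h(t)$ and $u(t,\cdot)$ is nonincreasing on $[0,h(t)]$ with $u(t,\theta(t))=\theta$; (iii) $\overline u(t,\overline h(t))=0$; (iv) the Stefan inequality, where a short computation with $E'(z)=\tfrac{2}{\sqrt\pi}e^{-z^2}$ gives $-\mu\overline u_x(t,\overline h(t))=\tfrac{\xi_0}{\sqrt{t+\tau}}$ — this is precisely where the defining identity \eqref{xi0}, equivalently $\mu\theta E'(\xi_0)=2\xi_0E(\xi_0)$, enters — while $\overline h'(t)=\eta'(t)+\tfrac{\xi_0}{\sqrt{t+\tau}}$, so $\overline h'(t)\ge-\mu\overline u_x(t,\overline h(t))$; and (v) the initial/geometric conditions $\eta(t)\ge h_0>0$, $h_0\le\eta(0)<\overline h(0)$, and $\overline u(0,\cdot)\ge0$ on $[\eta(0),\overline h(0)]$ with strict positivity in the interior, all immediate from the construction.

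Granting this, Lemma \ref{comp2} yields $h(t)<\overline h(t)=\eta(t)+2\xi_0\sqrt{t+\tau}$ for every $t>0$; since $\sqrt{t+\tau}\le\sqrt t+\sqrt\tau$ and $\eta(t)\le M_3t^{\alpha}$ with $\alpha>0$, the bounded terms $h_0$ and $2\xi_0\sqrt\tau$ are absorbed into $t^{\alpha}$ for $t$ large, giving $h(t)\le2\xi_0\sqrt t+Mt^{1/2-1/(p+1)}$ for a suitable $M>0$, as claimed. I do not expect a genuine obstacle here. The one substantive point is the reduction of the Stefan boundary condition for $\overline u$ to the trivial inequality $\eta'\ge0$, which hinges on \eqref{xi0}, together with the fact — supplied by Lemma \ref{thetatupper} — that the level line $\{u=\theta\}$ advances no faster than $t^{\alpha}$, so the translated self-similar profile contributes only the leading term $2\xi_0\sqrt t$. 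Everything else is bookkeeping: choosing $M_2$ so that the single curve $\eta$ dominates $\theta(\cdot)$ on all of $[0,\infty)$, smoothing $\eta$ so that $\eta\in C^1$ up to $t=0$, and placing the domain of $\overline u$ to the right of $[-h_0,h_0]$ at the initial time so that Lemma \ref{comp2} applies.
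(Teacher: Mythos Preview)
Your proposal is correct and follows essentially the same approach as the paper: both translate the self-similar Stefan profile $\rho$ to the right by a curve growing like $t^{1/2-1/(p+1)}$ that dominates $\theta(t)$ (this is exactly where Lemma \ref{thetatupper} enters), verify the supersolution inequalities, and apply Lemma \ref{comp2}. The only differences are cosmetic: the paper takes the simpler shift $\xi(t)=Mt^{\alpha}$ and starts the comparison at a large time $T_0$ chosen so that $\xi(T_0)>h(T_0)$, whereas you work from $t=0$ with the modified curve $\eta(t)=h_0+M_2[(t+1)^{\alpha}-1]$ and a time-shifted profile $\rho(t+\tau,\cdot)$ to keep everything $C^1$ up to the initial time.
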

\begin{proof}
In view of Lemma \ref{thetatupper}, we can find some $K>0$ and $T_0>1$ such that
\begin{align*}
\theta(t)\le Kt^{\frac{1}{2}-\frac{1}{p+1}}\ \ {\rm for}\ t\ge T_0.
\end{align*}
Fix $T_0$ and choose $M>0$ so that
\begin{align*}
M>\max\{K,\ \ T_0^{\frac{1}{p+1}-\frac{1}{2}}h(T_0)\}.
\end{align*}
Define
\begin{align*}
&\xi(t)=Mt^{\frac{1}{2}-\frac{1}{p+1}},\\
&\overline{u}(t,x)=\rho(t,x-\xi(t)),\\
&\overline{h}(t)=\xi(t)+r(t),
\end{align*}
where $\rho$, $r$ is given by \eqref{SolofStefan}.

We will check that $(\overline{u}, \xi, \overline{h})$ satisfies the condition of Lemma \ref{comp2}, that is,
\begin{align}
&\overline{h}(T_0)\ge \xi(T_0)>h(T_0), \label{6} \\
&\left\{\begin{array}{ll}
\overline{u}(T_0, x)\ge 0\ \ &{\rm for}\ x\in[\xi(T_0), \overline{h}(T_0)], \\
\overline{u}(T_0, x)>0\ \ &{\rm for}\ x\in(\xi(T_0), \overline{h}(T_0)),
\end{array}\right. \label{7} \\
&\left\{\begin{array}{ll}
\overline{u}(t, \xi(t))\ge 0\ &{\rm if}\ \xi(t)>h(t), \\
\overline{u}(t, \xi(t))\ge u(t, \xi(t))\ &{\rm if}\ \xi(t)\le h(t),
\end{array}\right. {\rm for}\ t\ge T_0,\label{8} \\
&\overline{u}(t, \overline{h}(t))=0\ \ {\rm for}\ \ t>T_0, \label{9} \\
&\overline{h}'(t)\ge -\mu \overline{u}_x(t, \overline{h}(t))\ \ {\rm for}\ \ t>T_0, \label{10} \\
&\overline{u}_t-\overline{u}_{xx}\ge f(\overline{u})\ \ {\rm for}\ \ t>T_0,\ x\in(\xi(t), h(t)). \label{11}
\end{align}
From the choice of $M$, it follows that
\begin{align*}
\overline{h}(T_0)=\xi(T_0)+r(T_0)\ge\xi(T_0)=MT_0^{\frac{1}{2}-\frac{1}{p+1}}\ge h(T_0),
\end{align*}
and so \eqref{6} holds.

By the definition of $\overline{u}$, \eqref{7} is trivially true.

When $t\ge T_0$ and $\xi(t)>h(t)$, clearly $\overline{u}(t, \xi(t))\ge 0$. As
 $$\theta(t)\le Kt^{\frac{1}{2}-\frac{1}{p+1}}< \xi(t)\ \ \mbox{for}\ t\ge T_0,$$  we get
\begin{align*}
\overline{u}(t,\xi(t))=\theta=u(t,\theta(t))\ge u(t,\xi(t))
\end{align*}
for $t\ge T_0$, $ \xi(t)<h(t)$. Thus \eqref{8} holds.

By the definition of $\overline{u}$ again, we have $\overline{u}(t,\overline{h}(t))=\rho(t, r(t))=0$, verifying \eqref{9}.

We now show that \eqref{10} holds. In fact, for $t\ge T_0$,
\begin{align*}
\overline{h}'(t)=r'(t)+\xi'(t)&=-\mu\rho_x(t,r(t))+\frac{M(p-1)}{2(p+1)}t^{-\frac{p+3}{2(p+1)}} \\
                              &>-\mu\rho_x(t,r(t))=-\mu\overline{u}_x(t, \overline{h}(t)).
\end{align*}
Therefore \eqref{10} is fulfilled.

Finally we verify \eqref{11}. Direct calculation gives
\begin{align*}
&\overline{u}_t=\rho_t(t, x-\xi(t))-\xi'(t)\rho_x(t, x-\xi(t)), \\
&\overline{u}_{xx}=\rho_{xx}(t, x-\xi(t)).
\end{align*}
Then we have, for $t\ge T_1$ and $\xi(t)<x<\overline{h}(t)$,
\begin{align*}
\overline{u}_t-\overline{u}_{xx}=-\xi'(t)\rho_x(t, x-\xi(t)).
\end{align*}
Since
\begin{align*}
\rho_x(t,x)=-\frac{\theta}{E(\xi_0)\sqrt{\pi t}}e^{-\frac{x^2}{4t}},
\end{align*}
we find
\begin{align*}
\overline{u}_t-\overline{u}_{xx}&=-\xi'(t)\rho_x(t, x-\xi(t)) \\
                                &=\frac{p-1}{2(p+1)}Mt^{\frac{p-1}{2(p+1)}-1}\frac{\theta}{E(\xi_0)\sqrt{\pi t}}e^{-\frac{(x-\xi(t))^2}{4t}}\ge 0=f(\overline{u}).
\end{align*}
Here we used the facts that $0\le \overline{u}\le\theta$ and $f(\overline{u})=0$. Consequently, \eqref{11} is satisfied.

With the help of Lemma \ref{comp2}, we can assert
\begin{align*}
h(t)&\le 2\xi_0\sqrt{t}+Mt^{\frac{1}{2}-\frac{1}{p+1}},\ \ \mbox{for}\ t\ge T_0.
\end{align*}
The proof is thus complete.
\end{proof}

\begin{lemma}
Assume that $p=1$ and let $(u,g,h)$ be the solution of \eqref{problem} with the initial datum $\tilde{u}_0$. Then we have
$$h(t)\leq 2\xi_0\sqrt{t}+O(1).$$
\end{lemma}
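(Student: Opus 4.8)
The plan is to mimic the upper-bound argument of Lemma \ref{l3.4}, but now exploiting the fact that for $p=1$ the quantity $l(b)=\pi/2$ is \emph{bounded} (uniformly in $b$), rather than growing like $t^{1/2-1/(p+1)}$. First I would observe that Claim 1 in the proof of Lemma \ref{thetatupper} used only assertions (i)--(iii) of Lemma \ref{lb} together with Lemmas 4.6--4.8 of \cite{DLZ}; it did not use $p>1$. Hence, exactly as there, for $t\ge T_0'$ we still have $\theta(t)\le l(b(t))$, where $b(t)\in(0,\sigma_1)$ is determined by $h(t)=L(b(t))$. Since $p=1$ gives $l(b)\equiv\pi/2$, this immediately yields a \emph{bounded} bound $\theta(t)\le \pi/2$ for all large $t$, replacing the conclusion of Lemma \ref{thetatupper}.

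Next I would repeat the upper-solution construction of Lemma \ref{l3.4} with the shift $\xi(t)$ now taken to be a suitable constant rather than a power of $t$. Concretely, fix $T_0>1$ with $h(t)>L(\sigma_1)$ for $t\ge T_0$, put
\begin{align*}
\xi(t)\equiv M:=\max\Big\{\tfrac{\pi}{2}+1,\ h(T_0)\Big\},\qquad
\overline u(t,x)=\rho(t,x-M),\qquad
\overline h(t)=M+r(t),
\end{align*}
with $\rho,r$ from \eqref{SolofStefan}. Then one verifies the hypotheses of Lemma \ref{comp2} exactly as in Lemma \ref{l3.4}: \eqref{6}--\eqref{7} hold by the choice of $M$ and the definition of $\overline u$; \eqref{8} holds because $\theta(t)\le\pi/2<M=\xi(t)$ for large $t$, so on the set $\xi(t)<h(t)$ we have $\overline u(t,\xi(t))=\rho(t,0)=\theta=u(t,\theta(t))\ge u(t,\xi(t))$, while on $\xi(t)\ge h(t)$ trivially $\overline u(t,\xi(t))\ge0$; \eqref{9} is immediate; and \eqref{10}--\eqref{11} become equalities/trivialities since $\xi'(t)\equiv0$, so $\overline u_t-\overline u_{xx}=-\xi'(t)\rho_x=0=f(\overline u)$ (using $0\le\overline u\le\theta$) and $\overline h'(t)=r'(t)=-\mu\rho_x(t,r(t))=-\mu\overline u_x(t,\overline h(t))$, which is consistent with the required inequality. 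Lemma \ref{comp2} then gives $h(t)<\overline h(t)=M+2\xi_0\sqrt t$ for all $t\ge T_0$, i.e. $h(t)\le 2\xi_0\sqrt t+O(1)$.

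The one point requiring a little care — and the main (minor) obstacle — is that when $\xi(t)$ is constant, inequality \eqref{10} holds only as an equality, so one must check that Lemma \ref{comp2} still applies in that borderline situation; inspecting its proof, the contradiction is derived from $\overline h'(t^*)>h'(t^*)$, and the strict inequality there came from the Hopf lemma applied to $w=\overline u-u$, not from strictness in \eqref{10}, so equality in \eqref{10} is harmless. (Alternatively, one can take $\xi(t)=M+\varepsilon t$ with $\varepsilon>0$ arbitrarily small to restore strict inequalities and then let $\varepsilon\downarrow0$, but this is not needed.) Assembling these observations gives $h(t)\le 2\xi_0\sqrt t+O(1)$, as claimed.
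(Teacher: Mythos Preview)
Your proposal is correct and follows essentially the same approach as the paper: both use that Claim~1 of Lemma~\ref{thetatupper} is valid for $p=1$, so $\theta(t)\le l(b(t))=\pi/2$ for large $t$, and then take the constant shift $\xi(t)\equiv M$ to form the upper solution $(\rho(t,x-M),\,r(t)+M)$ and apply Lemma~\ref{comp2}. Your explicit check that equality in \eqref{10} is harmless (because the strict inequality in the contradiction comes from the Hopf lemma, not from the hypothesis) is a nice clarification that the paper leaves implicit.
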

\begin{proof}
When $p=1$, it is clear that $l(b)=\pi/2$ for $b\in(0,\delta)$. On the other hand, from the proof of Claim 1 in the lemma above,
we have $\theta(t)\le l(b(t))$ for large $t$. By choosing a sufficiently large constant $M$, one can easily check that
$(\bar{u},\bar{h}):=(\rho(t,x-M),r(t)+M)$ is an upper solution of $(u,g,h)$ by a similar argument as in Lemma \ref{l3.4}.
Therefore, $h(t)\leq 2\xi_0\sqrt{t}+M$ follows.
\end{proof}
\subsection{Lower bound estimate for $p>3$}

Our proof of the upper bound of $h(t)$ heavily relies on the upper bound estimate for $\theta(t)$;
it seems that such an idea ceases to produce a refined lower bound of $h(t)$. To the aim, in the following we will develop a different approach by firstly deriving a lower bound of $u(t,0)$.

\begin{lemma}\label{lemma1}
Assume that $p>3$ and let $(u,g,h)$ be the solution of \eqref{problem} with the initial datum $\tilde{u}_0$. Then there exist constants $a>0$ and $T>0$ such that
\begin{align*}
u(t, 0)\ge \theta+a(t+T)^{-\frac{1}{p-1}},\ \ \forall t\geq0.
\end{align*}

\end{lemma}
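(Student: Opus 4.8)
The plan is to construct a subsolution of \eqref{problem} whose value at $x=0$ decays like $\theta + a(t+T)^{-1/(p-1)}$, and then invoke the comparison principle (Lemma \ref{comp1}) in the version for lower solutions. The guiding heuristic is that near $x=0$ the solution stays just above $\theta$, so on the region where $u \in [\theta,\theta+\sigma)$ the equation behaves like $u_t = u_{xx} + (u-\theta)^p$. Writing $v = u-\theta$, the ODE $v' = v^p$ (ignoring diffusion) has solutions $v(t) \sim c\,t^{-1/(p-1)}$, which is exactly the target rate; the role of the hypothesis $p>3$ should be to guarantee that the spatial correction terms are negligible against this algebraic rate, given that the relevant length scale is $O(\sqrt t)$ (so that $v_{xx}$ contributes at order $t^{-1/(p-1)} \cdot t^{-1} = t^{-1/(p-1)-1}$, indeed lower order, and moreover the free-boundary/flux terms, scaling like powers tied to $\theta(t)$ or $h(t) \sim \sqrt t$, remain controllable precisely when $p>3$).

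Concretely, I would look for a lower solution of the separated form
\begin{align*}
\underline{u}(t,x) = \theta + a(t+T)^{-\frac{1}{p-1}}\,\psi\!\left(\frac{x}{2\sqrt{t+T}}\right),
\end{align*}
where $\psi$ is a fixed smooth even profile with $\psi(0)=1$, $\psi > 0$ on some interval $(-\eta,\eta)$ and $\psi(\pm\eta)=0$, to be chosen (e.g. a truncated cosine, or the first Dirichlet eigenfunction of $-\psi'' = \lambda\psi$ on $(-\eta,\eta)$, which makes the diffusion term explicit). The associated moving front would be $\underline{h}(t) = 2\eta\sqrt{t+T}$, and $\underline{g}(t)=-\underline{h}(t)$. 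One then checks the differential inequality $\underline{u}_t \le \underline{u}_{xx} + f(\underline{u})$ on $\{|x| < \underline{h}(t)\}$: on this set $\underline{u}\in[\theta,\theta+a(t+T)^{-1/(p-1)}]$, so for $T$ large this lies in $[\theta,\theta+\sigma)$ and $f(\underline{u}) = (\underline{u}-\theta)^p = a^p (t+T)^{-p/(p-1)}\psi^p$. The left side $\underline{u}_t$ produces terms of order $(t+T)^{-1/(p-1)-1}$ (from both the prefactor derivative and the self-similar variable), while $\underline{u}_{xx}$ produces a term of order $(t+T)^{-1/(p-1)-1}$ as well; since $-1/(p-1)-1 < -p/(p-1)$ is equivalent to $1 > 1/(p-1)$, i.e. $p > 2$, the reaction term $f(\underline u)$ of order $(t+T)^{-p/(p-1)}$ actually dominates all of these for large $T$ — so, at least formally, the differential inequality holds for a suitable small $a$ and large $T$, and the stronger restriction $p>3$ must enter at the boundary.

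The main obstacle, then, is the free-boundary (Stefan) condition: Lemma \ref{comp1} requires $\underline{h}'(t) \le -\mu\,\underline{u}_x(t,\underline{h}(t))$, together with the ordering conditions $\underline{g}(t)\le g(t)$, $\underline{h}(t) \ge \dots$ wait — for a lower solution one needs $\underline h(t) \le h(t)$ to be \emph{deduced}, so the hypotheses to verify are $\underline h_0 \le h_0$ and the flux inequality. We have $\underline{h}'(t) = \eta(t+T)^{-1/2}$, while $-\mu\underline{u}_x(t,\underline h(t)) = -\mu\, a(t+T)^{-1/(p-1)} \cdot \frac{1}{2\sqrt{t+T}}\psi'(\eta) = \mu a |\psi'(\eta)|\,(t+T)^{-1/(p-1)-1/2}/2$. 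For the required inequality $\eta (t+T)^{-1/2} \le \mathrm{const}\cdot(t+T)^{-1/(p-1)-1/2}$ to hold for all large $t$ we would need $-1/(p-1)-1/2 \ge -1/2$, which is false; so a rigid self-similar ansatz of this type cannot satisfy the Stefan condition, and one must instead either (i) allow $\underline h(t)$ to grow strictly slower than the natural self-similar rate — e.g. take $\underline h(t)$ to be a constant, or $\underline h(t)=2\eta\sqrt{t+T} - $ (a correction), sacrificing some of the profile — or (ii) not impose a moving free boundary at all on the subsolution, but rather place $\underline u$ on a fixed bounded interval $[-R,R]$ with $\underline u(t,\pm R)=0$ and verify that $h(t)\ge R$ and $u(t,\pm R)\ge \underline u(t,\pm R)$ from the already-known fact $h(t)\to\infty$ and $u(t,x)\to\theta$ locally uniformly. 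Approach (ii) is cleanest: on a fixed interval the Stefan condition is vacuous, the only structural requirements are $\underline u \le u$ on the parabolic boundary (which follows for large $T$ from $u\to\theta>\theta+a(t+T)^{-1/(p-1)}\psi$ once $(t+T)^{-1/(p-1)}$ is small, handled by shifting time by $T$) and the interior inequality — and then $u(t,0)\ge \underline u(t,0) = \theta + a(t+T)^{-1/(p-1)}$ is exactly the claim. Thus I expect the proof to run: fix the profile $\psi$ on a fixed interval, reduce to the interior differential inequality, carry out the scaling computation above to pin down $a$ small and $T$ large (this is where $p>3$, or at least $p>2$, is used to make the reaction term absorb the error), use the convergence $u\to\theta$ and $h(t)\to\infty$ to start the comparison at time $0$ after the time shift, and conclude by Lemma \ref{comp1}. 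The delicate point to get exactly right is the interaction between the prefactor decay and the self-similar spatial spreading in $\underline u_t$, and confirming that the constant in front of $f(\underline u)$ genuinely beats it — i.e. that the sign works out with room to spare for large $T$.
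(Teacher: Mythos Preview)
Your proposal contains two genuine gaps.

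First, the scaling computation is off: $-\tfrac{1}{p-1}-1=-\tfrac{p}{p-1}$, so $\underline u_t$, $\underline u_{xx}$, and $f(\underline u)$ are all of the \emph{same} order $(t+T)^{-p/(p-1)}$. Substituting the self-similar ansatz and dividing by this common factor (with $y=x/\sqrt{t+T}$) leaves, up to normalisation,
\[
\varphi''+\tfrac{y}{2}\varphi'+\tfrac{1}{p-1}\varphi+\varphi^{p}=0,
\]
which is precisely the Haraux--Weissler profile ODE. There is no room to make ``$a$ small and $T$ large'' so that the reaction dominates; one must actually solve this equation. The condition $p>3$ enters here, not at any boundary: it is exactly the range (with $0<\gamma<\{(p-3)/2(p-1)\}^{1/(p-1)}$) in which this ODE with $\varphi(0)=\gamma$, $\varphi'(0)=0$ admits a solution that stays \emph{strictly positive for all $y\ge 0$}, rather than vanishing on a finite interval.

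Second, approach (ii) cannot be closed at the lateral boundary. On a fixed interval $[-R,R]$ you would need $u(t,\pm R)\ge \underline u(t,\pm R)\ge \theta$ for all large $t$. But since $u_x(t,x)<0$ for $x\in(0,h(t)]$ and $u(t,\theta(t))=\theta$, one has $u(t,R)<\theta$ whenever $R>\theta(t)$; nothing established so far gives a lower bound on $\theta(t)$, and the convergence $u(t,\cdot)\to\theta$ carries no information about the sign of $u-\theta$ at a fixed point. So the parabolic-boundary ordering you need is simply not available.

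The paper therefore abandons comparison altogether. It takes the globally positive Haraux--Weissler self-similar solution $v(t,x)=t^{-1/(p-1)}\varphi(x/\sqrt t)$ of $v_t=v_{xx}+v^p$, sets $w=\theta+v$, and studies $\eta(t,x)=u(t,x)-w(t+T,x)$ on $[0,h(t)]$. For suitable $T$ one arranges that $\eta(0,\cdot)$ has exactly one nondegenerate zero, with $\eta(0,0)>0>\eta(0,h(0))$; a zero-number argument then forces this single-sign-change pattern to persist. If ever $\eta(t_1,0)=0$, one tracks the zero to conclude $u<w$ on $[-h(t_2),h(t_2)]$ at some later $t_2$, chooses $\varepsilon_0>0$ with $(1+\varepsilon_0)u(t_2,\cdot)\le w(t_2+T,\cdot)$, and observes that the solution with initial datum $(1+\varepsilon_0)u(t_2,\cdot)$ must spread to $1$ by the threshold trichotomy (Theorem~\ref{theoremDL}) yet is trapped below $w\to\theta$ --- a contradiction. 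Hence $u(t,0)>w(t+T,0)=\theta+\gamma(t+T)^{-1/(p-1)}$ for all $t\ge 0$.
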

\begin{proof}
We first recall a well known result of the self-similar solution to the following nonlinear heat equation
\begin{align}\label{NLH}
v_t=v_{xx}+|v|^{p-1}v.
\end{align}
Consider an associated problem
\begin{align*}
\left\{
\begin{array}{ll}
\displaystyle\frac{\varphi}{p-1}+\frac{y}{2}\varphi'+\varphi''+|\varphi|^{p-1}\varphi=0,\ \ y>0,\\
\varphi(0)=\gamma,\ \varphi'(0)=0.
\end{array}\right.
\end{align*}
By the result of Haraux and Weissler \cite{HW}, for
 $$
 p>3\ \ \mbox{and}\ \ 0<\gamma<\left\{[p-3]/2(p-1)\right\}^{\frac{1}{p-1}},
 $$
the above problem has a unique solution $\varphi$, which is defined for all $y>0$, and
$\varphi(y)>0, \ \forall y\ge 0$. If we define
 $$v(t,x)=t^{-1/(p-1)}\varphi(x/\sqrt{t}),$$ then $v$ satisfies \eqref{NLH}.

Set
  $$
  w(t,x)=\theta+v(t,x).
  $$
Then $w$ solves
\begin{align*}
w_t-w_{xx}=|w-\theta|^{p-1}(w-\theta),\ \ t>0,\ x\in\mathbb{R}.
\end{align*}
In view of $v(t,\cdot)\to 0$ as $t\to\infty$ locally uniformly in $\mathbb{R}$ (actually in $C^1(\mathbb{R})$),
it follows that $w(t,\cdot)\to\theta$ locally uniformly in $\mathbb{R}$ as $t\to\infty$.

Recall that $u(t,0)>\theta$ for all $t\ge 0$.  Thus there exists $T>0$ such that
\begin{align*}
u(0,0)>w(T_{0}, 0).
\end{align*}
Furthermore we may assume that $u(0,x)$ and $w(T, x)$ have exact one intersection point $y\in (0, h_0)$.

Denote
 $$\eta(t,x)=u(t,x)-w(t+T, x)\ \ \mbox{for}\ t\ge 0,\ x\in[0,h(t)].$$ Then we claim
\begin{align*}
\eta(t,0)>0\ \ {\rm for\ all}\ t>0.
\end{align*}

Otherwise there exists $t_1>0$ such that
\begin{align*}
\eta(t, 0)>0\ \ {\rm for}\ t\in[0, t_1)\  {\rm and}\  \eta(t_1, 0)=0.
\end{align*}
Since $\eta(t,0)>0>\eta(t, h(t))$ for $t\in (0,t_1)$ and the unique existence of $y(t)$ (zero of $\eta$) for $t\in(0, \varepsilon_1)$ ($\varepsilon_1>0$ is a small constant), we deduce that
$\eta(t,x)$ has a unique nondegenerate zero over $[0, h(t)]$ for $t\in (0, t_1)$ due to Lemma 2.2 in \cite{DLZ}. Hence $y(t)$ can be extended to all $t\in(0,t_1)$.

Let us look at the limit of $y(t)$ as $t$ increases to $t_1$. If the limit does not exist, then as the proof of Lemma 2.4 in \cite{DLZ} we have $\eta(t_1, x)\equiv 0$ over $[0, h(t_1)]$ which
contradicts with $\eta(t_1, h(t_1))<0$. Hence the limit exists and we denote it by $y(t_1)$. As $\eta(t_1, y(t_1))=0$, clearly $y(t_1)<h(t_1)$. If $y(t_1)=0$, the maximum principle,
as applied to $\eta$ over $\{(t,x)|0<t<t_1, y(t)<x<h(t)\}$, gives
 $$\eta(t_1, x)<0\ \ \mbox{for}\ x\in (0, h(t_1)].$$ It is easily seen that
 $$u(t_1, x)<w(t_1+T, x)\ \ \mbox{for}\ x\in [-h(t_1), h(t_1)]\backslash \{0\}.$$
By the strong maximum principle,
 \begin{align*}
 u(t,x)<w(t+T, x)\ \ {\rm for}\  t>t_1,\  x\in [-h(t), h(t)].
 \end{align*}
For any $t_2>t_1$, we choose $\varepsilon_0>0$ small enough such that
\begin{align*}
(1+\varepsilon_0)u(t_2, x)\le w(t_2+T, x)\ \ {\rm for}\ 0\le x\le h(t_2).
\end{align*}
We can apply the comparison principle to conclude that
\begin{align*}
u(t, x; (1+\varepsilon_0)u(t_2, x))\le w(t+t_2+T, x)\ \ \ {\rm for}\  t\ge 0,\  0\le x\le h(t+t_2).
\end{align*}
By taking $(1+\varepsilon_0)u(t_2, \cdot)$ as a new initial value, Theorem \ref{theoremDL} concludes that
\begin{align*}
u(t, x; (1+\varepsilon_0)u(t_2, x))\to 1,\ \ {\rm as}\ t\to\infty,
\end{align*}
which is a contradiction.

If $y(t_1)\in (0, h(t_1))$, then by the maximum principle applied to $\eta$ over $\{(t, x)|0\le t\le t_1,\ 0<x<y(t)\}$,
one can see that $\eta(t_1, x)>0$ for $0<x<y(t_1)$. In light of the Hopf lemma, we further have
\begin{align*}
\eta_x(t_1,0)>0,
\end{align*}
which implies $$u_x(t_1, 0)>w_x(t_1+T, 0)=v_x(t_1+T, 0)=0.$$ This is a contradiction to $u_x(t_1, 0)=0$. Thus, our previous claim holds.

As  a result, we derive
\begin{align*}
u(t, 0)>w(t+T, 0)=\theta+(t+T)^{-\frac{1}{p-1}}\varphi(0)=\theta+\gamma (t+T)^{-\frac{1}{p-1}}.
\end{align*}
Denoting $a=\gamma$, we obtain the desired assertion.
\end{proof}

Inspired by the Stefan problem \eqref{SolofStefan}, we are now able to deduce the lower bound of $h(t)$ using Lemma \ref{lemma1}. Indeed, we have
\begin{lemma}
Assume that $p>3$ and Let $(u,g,h)$ be the solution of \eqref{problem} with the initial datum $\tilde{u}_0$. Then there exists a constant $m>0$ such that
\begin{align*}
h(t)\ge 2\xi_0\sqrt{t}+mt^{\frac{1}{2}-\frac{1}{p-1}},\ \ {\rm as}\  t\to\infty.
\end{align*}
where $\xi_0>0$ is given by \eqref{xi0}.
\end{lemma}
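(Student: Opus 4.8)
The plan is to construct an explicit \emph{lower} solution of \eqref{problem} out of a slightly over‑heated self‑similar Stefan profile, feed it with the pointwise bound of Lemma~\ref{lemma1}, and compare via the inequality‑reversed version of Lemma~\ref{comp1}. Put $\delta(t):=a(t+T)^{-1/(p-1)}$ (positive, $C^1$, strictly decreasing, $\to0$) and $\Psi(y):=2ye^{y^{2}}\int_0^{y}e^{-s^{2}}ds$, which is $C^1$, strictly increasing on $[0,\infty)$ with $\Psi(\xi_0)=\mu\theta$ by \eqref{xi0}. For all large $t$ let $\underline\xi(t)$ solve $\Psi(\underline\xi(t))=\mu(\theta+\delta(t))$; then $\underline\xi(\cdot)$ is $C^1$, strictly decreasing, $\underline\xi(t)>\xi_0$, $\underline\xi(t)\to\xi_0$, and since $\Psi'(\xi_0)>0$ the mean value theorem gives a constant $c_0>0$ with $\underline\xi(t)-\xi_0\ge c_0\,\delta(t)$ for all large $t$. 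Fix $t_0$ large (so $\delta(t_0)<\sigma$) and a small $\epsilon_1\in(0,1]$, set $s(t):=t-t_0+\epsilon_1$, and for $t\ge t_0$, $0\le x\le\underline h(t)$, define
\[
\underline u(t,x):=(\theta+\delta(t))\left[1-\frac{E(x/2\sqrt{s(t)})}{E(\underline\xi(t))}\right],\qquad \underline h(t):=2\underline\xi(t)\sqrt{s(t)},
\]
with $E$ as in \eqref{SolofStefan}; this is the Stefan profile \eqref{SolofStefan} run from time $\epsilon_1$, with amplitude $\theta+\delta(t)$ and self‑similar constant $\underline\xi(t)$ (note $\underline h\in C^1$ since $\epsilon_1>0$).

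Next I would verify that $(\underline u,\ \xi\equiv0,\ \underline h)$ satisfies the hypotheses of the lower‑solution form of Lemma~\ref{comp1} on $[t_0,T]$ (the problem being autonomous, $t_0$ serves as the initial instant). Since $x\mapsto E(x/2\sqrt{s})$ solves the heat equation, a direct computation gives
\[
\underline u_t-\underline u_{xx}=\delta'(t)\left[1-\frac{E(x/2\sqrt s)}{E(\underline\xi)}\right]+(\theta+\delta(t))\,\frac{E(x/2\sqrt s)\,E'(\underline\xi)\,\underline\xi'(t)}{E(\underline\xi)^{2}}\le 0,
\]
because $\delta'<0$, $\underline\xi'\le0$, $s'\equiv1$, and both bracketed/ratio factors are nonnegative on the domain; since $0\le\underline u\le\theta+\delta(t)<\theta+\sigma$ we have $f(\underline u)\ge0$ by \eqref{f2}, hence $\underline u_t\le\underline u_{xx}+f(\underline u)$. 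At the inner edge $\underline u(t,0)=\theta+\delta(t)\le u(t,0)$ by Lemma~\ref{lemma1} and $0\ge g(t)$; at the moving edge $\underline u(t,\underline h(t))=0$, and the relation $\mu(\theta+\delta(t))=\Psi(\underline\xi(t))$ gives $-\mu\underline u_x(t,\underline h(t))=\underline\xi(t)/\sqrt{s(t)}\ge 2\underline\xi'(t)\sqrt{s(t)}+\underline\xi(t)/\sqrt{s(t)}=\underline h'(t)$. For the initial ordering, $\underline h(t_0)=2\underline\xi(t_0)\sqrt{\epsilon_1}<h(t_0)$ once $\epsilon_1$ is small, and on $[0,\underline h(t_0)]$ the concavity of $E$ yields $E(x/2\sqrt{\epsilon_1})\ge\frac{E(\underline\xi(t_0))}{\underline\xi(t_0)}\cdot\frac{x}{2\sqrt{\epsilon_1}}$, so $\underline u(t_0,x)\le(\theta+\delta(t_0))\bigl(1-\frac{x}{2\underline\xi(t_0)\sqrt{\epsilon_1}}\bigr)$, whereas $u(t_0,x)\ge u(t_0,0)-\|u_x(t_0,\cdot)\|_\infty x\ge(\theta+\delta(t_0))-\|u_x(t_0,\cdot)\|_\infty x$; hence $\underline u(t_0,\cdot)\le u(t_0,\cdot)$ on $[0,\underline h(t_0)]$ provided $\sqrt{\epsilon_1}\le\frac{\theta+\delta(t_0)}{2\underline\xi(t_0)\|u_x(t_0,\cdot)\|_\infty}$.

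Lemma~\ref{comp1} (reversed) then gives $h(t)\ge\underline h(t)$ for all $t\ge t_0$. Finally, since $\sqrt{s(t)}=\sqrt{t-t_0+\epsilon_1}\ge\sqrt t-Ct^{-1/2}$, and using $\underline\xi(t)\to\xi_0$, $\underline\xi(t)-\xi_0\ge c_0\delta(t)$ and $\delta(t)\ge\frac a2 t^{-1/(p-1)}$ for large $t$,
\[
h(t)\ge\underline h(t)=2\underline\xi(t)\sqrt{s(t)}\ge 2\xi_0\sqrt t+2c_0\delta(t)\sqrt t-C't^{-1/2}\ge 2\xi_0\sqrt t+\frac{c_0a}{2}\,t^{\frac12-\frac1{p-1}}
\]
for all large $t$; here $p>3$ is crucial, since it makes $\frac12-\frac1{p-1}>0>-\frac12$, so that $2c_0\delta(t)\sqrt t$ dominates $C't^{-1/2}$. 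Taking $m:=c_0a/2$ completes the argument.

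The main obstacle I anticipate is the simultaneous bookkeeping of the three moving ingredients of the barrier — the decaying amplitude $\theta+\delta(t)$, the slowly relaxing constant $\underline\xi(t)$, and the shifted clock $s(t)=t-t_0+\epsilon_1$ — so that the subsolution inequality, the Stefan free‑boundary inequality and the initial ordering $\underline u(t_0,\cdot)\le u(t_0,\cdot)$ all hold at once. The shift $\epsilon_1$ is the device that makes $\underline h(t_0)$ as small as needed while perturbing $\underline h$ only at the harmless order $t^{-1/2}$, and the concavity of $E$ is what lets the steep drop of $\underline u(t_0,\cdot)$ near $x=0$ beat the finite slope of $u(t_0,\cdot)$; reconciling these is the crux of the proof.
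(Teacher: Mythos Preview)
Your proposal is correct and follows essentially the same approach as the paper: build a sub\-solution from a self\-similar Stefan profile with amplitude $\theta+\delta(t)$ (the paper writes $\theta+\xi(t)$ with $\xi(t)=b(t+T)^{-1/(p-1)}$, $b<a$), let the implicitly defined self\-similar constant $\underline\xi(t)$ (the paper's $\beta(t)$) exceed $\xi_0$ by an amount comparable to $\delta(t)$, feed the inner boundary with Lemma~\ref{lemma1}, and check the free\-boundary inequality using $\Psi(\underline\xi)=\mu(\theta+\delta)$. The only cosmetic differences are that the paper shrinks the amplitude ($b<a$) and shifts $u$ forward in time to get the initial ordering near $x=0$ by plain continuity, whereas you keep the full amplitude $a$, shift the barrier's clock via $s(t)=t-t_0+\epsilon_1$, and use the concavity of $E$ together with a Lipschitz bound on $u(t_0,\cdot)$; both devices accomplish the same thing.
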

\begin{proof}
Let
 $$\xi(t)=b(t+T)^{-\frac{1}{p-1}},$$ where $T$ is the constant determined by Lemma \ref{lemma1} and $0<b<a$ is a constant to be chosen later.
We first observe that there exists $\beta\in C^1$ such that $\beta(t)>0$ for $t>0$ and
\begin{align}\label{betat}
\sqrt{\pi}\beta(t)e^{\beta(t)^2}E(\beta(t))=\mu(\theta+\xi(t)),
\end{align}
where $E$ is defined in \eqref{SolofStefan}. In fact, by setting $$\Phi(x)=\sqrt{\pi}xe^{x^2}E(x),$$ we have
\begin{align*}
\Phi'(x)=\sqrt{\pi}e^{x^2}E(x)+2\sqrt{\pi}x^2e^{x^2}E(x)+2x>0\ \ {\rm for}\ x>0.
\end{align*}
Since $\Phi(0)=0$ and $\lim_{x\to\infty}\Phi(x)=\infty$, there exists a unique $\beta=\beta(t)$ such that $\Phi(\beta(t))=\mu(\theta+\xi(t))$.
The implicit function theorem guarantees that $\beta\in C^1$. One can further claim that $\beta'(t)<0$. Indeed, by \eqref{betat},
\begin{align*}
\beta'(t)\Phi'(\beta(t))=\mu\xi'(t)=-\frac{b\mu}{p-1}(t+T)^{-\frac{1}{p-1}-1}<0\ \ {\rm for}\ t>0,
\end{align*}
which indicates $\beta'(t)<0$ for $t>0$.

Now we define a lower solution as follows:
\begin{align*}
\underline{h}(t)=2\beta(t)\sqrt{t},\ \ \underline{u}(t,x)=(\theta+\xi(t))\left\{1-\frac{E\left(\frac{x}{2\sqrt{t}}\right)}{E(\beta(t))}\right\}.
\end{align*}
Fix $T_0>0$ and choose $b$ so that
\begin{align}
&b<a\left(\frac{T}{T+T_0}\right)^{\frac{1}{p-1}},\ \ {\rm that\ is},\ \ bT^{-\frac{1}{p-1}}<a(T+T_0)^{-\frac{1}{p-1}},\label{ba} \\
&b<\sigma T^{\frac{1}{p-1}}. \nonumber
\end{align}
Then we have
 $$\underline{u}(t,x)\le\theta+\sigma,\ \ \ f(\underline{u})\ge 0.$$

We next show that for sufficiently small $\varepsilon>0$, $(\underline{u}, \xi, \underline{h})$ is
a lower solution to \eqref{problem}, that is,
\begin{align}
&\underline{u}_t-\underline{u}_{xx}-f(\underline{u})\le 0,\ \ t>\varepsilon,\ 0<x<\underline{h}(t), \label{1} \\
&\underline{u}(t, \underline{h}(t))=0,\ \ t>\varepsilon, \label{2}\\
&\underline{h}(\varepsilon)\le h(T_{0}),\ \underline{u}(\varepsilon, x)\le u(T_0, x),\  0\le x\le \underline{h}(\varepsilon),\label{3}\\
&\underline{u}(t, 0)\le u(t-\varepsilon+T_0, 0),\ \ t>\varepsilon, \label{4}\\
&\underline{h}'(t)\le -\mu \underline{u}_x(t, \underline{h}(t)),\ \ t>\varepsilon. \label{5}
\end{align}

Basic calculation yields
\begin{align*}
 &\underline{u}_t-\underline{u}_{xx}-f(\underline{u}) \\\
=&\xi'(t)\left\{1-\frac{E\left(\frac{x}{2\sqrt{t}}\right)}{E(\beta(t))}\right\}-(\theta+\xi(t))\left\{\frac{E\left(\frac{x}{2\sqrt{t}}\right)}{E(\beta(t))}\right\}_t+(\theta+\xi(t))\left\{\frac{E\left(\frac{x}{2\sqrt{t}}\right)}{E(\beta(t))}\right\}_{xx}-f(\underline{u}) \\
=&-\frac{b}{p-1}(t+T)^{-\frac{1}{p-1}-1}\left\{1-\frac{E\left(\frac{x}{2\sqrt{t}}\right)}{E(\beta(t))}\right\} \\
 &\ \ \ -(\theta+\xi(t))\frac{-\frac{1}{2\sqrt{\pi}}xt^{-\frac{3}{2}}e^{-\frac{x^2}{4t}}E(\beta(t))-\frac{2}{\sqrt{\pi}}\beta'(t)e^{-\beta(t)^2}E\left(\frac{x}{2\sqrt{t}}\right)}{[E(\beta(t))]^2} \\
 &\ \ \ -(\theta+\xi(t))\frac{x}{2\sqrt{\pi}t^{\frac{3}{2}}}\cdot\frac{1}{E(\beta(t))}e^{-\frac{x^2}{4t}}-f(\underline{u}) \\
=&-\frac{b}{p-1}(t+T)^{-\frac{1}{p-1}-1}\left\{1-\frac{E\left(\frac{x}{2\sqrt{t}}\right)}{E(\beta(t))}\right\}+(\theta+\xi(t))\frac{2\beta'(t)e^{-\beta(t)^2}E\left(\frac{x}{2\sqrt{t}}\right)}{\sqrt{\pi}[E(\beta(t))]^2}-f(\underline{u}).
\end{align*}
As $\beta'(t)<0$, $\underline{u}_t-\underline{u}_{xx}-f(\underline{u})\le 0$, and so \eqref{1} holds.

By the definition, clearly \eqref{2} is true.

We then check \eqref{3}. By Lemma \ref{lemma1} and \eqref{ba} we have
\begin{align*}
\underline{u}(\varepsilon, x)\le \underline{u}(\varepsilon, 0)\le \theta+b(\varepsilon+T)^{-\frac{1}{p-1}}&<\theta+bT^{-\frac{1}{p-1}}=\underline{u}(0,0) \\
                      &<\theta+a(T+T_0)^{-\frac{1}{p-1}}\le u(T_0, 0)
\end{align*}
for any sufficiently small $\varepsilon>0$ and $0\le x\le \underline{h}(\varepsilon)$. By the continuity of $u(T_0, x)$, one can find a small $\delta>0$ such that
\begin{align*}
\underline{u}(0,0)=\theta+bT^{-\frac{1}{p-1}}<u(T_0, x)\ \ {\rm for}\ x\in[0, \delta).
\end{align*}
By choosing $\varepsilon>0$ to be small enough so that
\begin{align*}
\varepsilon<\frac{1}{4\beta(0)^{2}}\min\{\delta^{2}, h_{0}^{2}\},\ \ {\rm that\ is},\ \ 2\beta(\varepsilon)\sqrt{\varepsilon}<2\beta(0)\sqrt{\varepsilon}<\min\{\delta, h_{0}\},
\end{align*}
we deduce
\begin{align*}
\underline{h}(\varepsilon)=2\beta(\varepsilon)\sqrt{\varepsilon}<h_{0}<h(T_{0})
\end{align*}
and
\begin{align*}
\underline{u}(\varepsilon, x)<\theta+bT^{-\frac{1}{p-1}}\le u(T_{0}, x)
\end{align*}
for $x\in [0, \underline{h}(\varepsilon)]=[0, 2\beta(\varepsilon)\sqrt{\varepsilon}]$. This indicates \eqref{3}.

In view of \eqref{ba}, we have
\begin{align*}
\underline{u}(t,0)&=\theta+b(t+T)^{-\frac{1}{p-1}} \\
                  &\le\theta+b(t-\varepsilon+T)^{-\frac{1}{p-1}} \\
                  &<\theta+a(t-\varepsilon+T+T_0)^{-\frac{1}{p-1}}\le u(t-\varepsilon+T_0, 0),
\end{align*}
which verifies \eqref{4}.

Finally we check \eqref{5}. From the definition of $\beta(t)$ and \eqref{betat}, it follows
\begin{align*}
-\mu\underline{u}_x(t, \underline{h}(t))=\mu(\theta+\xi(t))\frac{\frac{2}{\sqrt{\pi}}\cdot\frac{1}{2\sqrt{t}}e^{-\beta(t)^2}}{E(\beta(t))}=\beta(t)\frac{1}{\sqrt{t}}.
\end{align*}
Therefore,
\begin{align*}
\underline{h}'(t)=2\beta'(t)\sqrt{t}+\beta(t)\frac{1}{\sqrt{t}}\le\beta(t)\frac{1}{\sqrt{t}}=-\mu\underline{u}_x(t, \underline{h}(t)).
\end{align*}

Now, applying the comparison principle (Lemma \ref{comp1}),  we can conclude
\begin{align*}
h(t-\varepsilon+T_0)\ge \underline{h}(t)\ \ {\rm for}\ \ t\ge\varepsilon.
\end{align*}
In view of
\begin{align*}
&\Phi(\beta(t))=\Phi(\xi_0)+\mu b(t+T)^{-\frac{1}{p-1}}, \\
&\Phi'(\zeta)(\beta(t)-\xi_0)=\mu b(t+T)^{-\frac{1}{p-1}}\ \ (\xi_0\le\zeta\le\beta(t)),
\end{align*}
we have
\begin{align*}
\beta(t)-\xi_0=\frac{\mu b}{\Phi'(\zeta)}(t+T)^{-\frac{1}{p-1}}.
\end{align*}
As a consequence, we obtain
\begin{align*}
h(t)&\ge\underline{h}(t+\varepsilon-T_0) \\
    &=2\left(\xi_0+\frac{\mu b}{\Phi'(\zeta)}(t+\varepsilon-T_0+T)^{-\frac{1}{p-1}}\right)(t+\varepsilon-T_0)^{\frac{1}{2}}\\
    &\ge 2\left(\xi_0+\frac{\mu b}{2\Phi'(\zeta)}(t+\varepsilon-T_0)^{-\frac{1}{p-1}}\right)(t+\varepsilon-T_0)^{\frac{1}{2}}
\end{align*}
for sufficiently large $t$, and then there exists $m>0$ such that
\begin{align*}
h(t)&\ge 2\xi_0(t+\varepsilon-T_0)^{\frac{1}{2}}+m(t+\varepsilon-T_0)^{\frac{1}{2}-\frac{1}{p-1}} \\
    &\ge 2\xi_0\sqrt{t}+mt^{\frac{1}{2}-\frac{1}{p-1}}+o(1),\ \ \mbox{as}\ t\to\infty.
\end{align*}

\end{proof}

\subsection{Lower bound estimate for $1\le p\le 3$}

It turns out that the lower bound estimate of $h(t)$ in the case $1\le p\le 3$ is easier to establish.

\begin{lemma}\label{l3.8}
Assume that $1\leq p\leq 3$ and let $(u,g,h)$ be the solution of \eqref{problem} with the initial datum $\tilde{u}_0$. Then we have
\begin{align*}
h(t)\ge 2\xi_0\sqrt{t}+O(1),\ \ {\rm as}\  t\to\infty,
\end{align*}
where $\xi_0>0$ is given by \eqref{xi0}.
\end{lemma}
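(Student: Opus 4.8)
The plan is to build a lower solution analogous to the one used in the case $p>3$, but exploiting the fact that when $1\le p\le3$ the self-similar machinery of Haraux--Weissler is no longer available; instead we only need a \emph{bounded} correction to $2\xi_0\sqrt t$, so the construction can be much cruder. The key observation is that by Claim 1 in the proof of Lemma \ref{thetatupper} (which holds for all $p\ge1$) we have $\theta(t)\le l(b(t))$ for all large $t$, and by Lemma \ref{lb} together with Lemma \ref{DLZLa43}, $l(b(t))$ is either the constant $\pi/2$ (when $p=1$) or, via \eqref{lbest1} and $b(t)=O(t^{-1/(p+1)})$, of order $t^{(1/2)-1/(p+1)}$. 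Rather than chase that, for the lower bound it suffices to fix a small constant $b_0\in(0,\sigma)$ and work with the stationary-profile solution $V_{b_0}$ of \eqref{ivp}: since transition happens, $u(t,0)\to\theta<\theta+b_0$, so $V_{b_0}$ cannot sit below $u$ everywhere; but what we actually want is a moving-front lower solution whose foot lags $2\xi_0\sqrt t$ by only a bounded amount.

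First I would set up the lower solution exactly as in the $p>3$ case but with a \emph{constant} shift: take $\xi(t)\equiv\xi_\infty$ for a suitable small constant $\xi_\infty>0$, and
\begin{align*}
\underline{h}(t)=2\beta_0\sqrt t,\qquad
\underline{u}(t,x)=(\theta+\xi_\infty)\left\{1-\frac{E\!\left(\frac{x}{2\sqrt t}\right)}{E(\beta_0)}\right\},
\end{align*}
where $\beta_0$ is the unique positive root of $\sqrt\pi\,\beta e^{\beta^2}E(\beta)=\mu(\theta+\xi_\infty)$, so that $\beta_0>\xi_0$ and $\beta_0\to\xi_0$ as $\xi_\infty\to0$. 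Because $\xi_\infty$ is now time-independent, $\beta_0'(t)=0$ and the delicate sign computation of \eqref{1} collapses: one checks directly that $\underline{u}_t-\underline{u}_{xx}-f(\underline{u})\le 0$ simply because $\underline{u}\le\theta+\xi_\infty<\theta+\sigma$ forces $f(\underline{u})\ge0$ (using \eqref{f2}), while the remaining terms vanish — $\underline u$ is a self-similar solution of the pure heat equation translated by nothing, so $\underline u_t-\underline u_{xx}=0$. The free-boundary inequality \eqref{5} is even an equality here: $\underline h'(t)=\beta_0/\sqrt t=-\mu\underline u_x(t,\underline h(t))$ by the defining relation for $\beta_0$.

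Next I would verify the initial and boundary comparisons needed for Lemma \ref{comp1}. Since $u(t,0)>\theta$ for all $t\ge0$ and, more quantitatively, transition forces $u$ to stay above $\theta$ while $u(t,0)\to\theta$, we cannot hope $u(t,0)\ge\theta+\xi_\infty$ for a fixed $\xi_\infty$; therefore the comparison must be started after a suitable time and run only as long as it is valid. The cleaner route is: since $\lim_{t\to\infty}u(t,x)=\theta$ locally uniformly and, by Theorem \ref{DLZthm12}, $h(t)=2\xi_0\sqrt t\{1+o(1)\}$, for a fixed small $\xi_\infty$ one chooses $T_0$ large and $\varepsilon>0$ small so that $\underline h(\varepsilon)<h_0<h(T_0)$ and $\underline u(\varepsilon,x)<\theta+bT^{-1/(p-1)}$-type bounds are replaced by $\underline u(\varepsilon,x)\le\theta+\xi_\infty<u(T_0,x)$ near $x=0$; the only genuinely new point relative to the $p>3$ proof is the boundary condition $\underline u(t,0)\le u(t-\varepsilon+T_0,0)$, which now reads $\theta+\xi_\infty\le u(t-\varepsilon+T_0,0)$ and \emph{fails} for large $t$. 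The fix, and the step I expect to be the main obstacle, is to interpose the estimate $\theta(t)\le l(b(t))$ from Claim 1: instead of comparing at $x=0$, one shifts the whole lower solution so that its foot sits at $x=\xi_\infty$ with $\xi_\infty$ a constant, and imposes the boundary inequality at $x=\xi_\infty$ using $\underline u(t,\xi_\infty)=\theta=u(t,\theta(t))\ge u(t,\xi_\infty)$, valid precisely when $\theta(t)\le\xi_\infty$ — which in turn needs $\xi_\infty\ge l(b(t))$; but $l(b(t))$ is unbounded when $p>1$. Hence for $1<p\le3$ one must accept a \emph{non-constant} but slowly growing $\xi(t)$ of order $t^{(1/2)-1/(p+1)}$ and absorb the resulting error — however, since the claimed conclusion is only $h(t)\ge 2\xi_0\sqrt t+O(1)$, the honest argument is instead to run the constant-$\xi_\infty$ lower solution on expanding time windows $[T_0,T_0']$ and let $T_0\to\infty$: on each window Lemma \ref{comp1} gives $h(t-\varepsilon+T_0)\ge 2\beta_0\sqrt t$, and a Taylor expansion $\beta_0-\xi_0=c(\xi_\infty)>0$ yields $h(t)\ge 2\xi_0\sqrt t + 2c(\xi_\infty)\sqrt{t-\varepsilon+T_0}-\xi_0\,O(1)$; the $\sqrt t$ gain beats any $O(1)$, so in fact $h(t)-2\xi_0\sqrt t\to+\infty$, which is stronger than, and certainly implies, $h(t)\ge2\xi_0\sqrt t+O(1)$. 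The last step is to record that the $p=1$ case is already contained in this since there $l(b)=\pi/2$ is bounded and one may even take $\xi_\infty$ with $\xi_\infty\ge\pi/2$ if one wishes a cleaner two-sided statement, but the $O(1)$ lower bound needs nothing beyond the above.
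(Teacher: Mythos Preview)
Your proposal has a genuine gap, and the missed idea is surprisingly simple. You correctly identify that with any fixed $\xi_\infty>0$ the boundary comparison $\theta+\xi_\infty\le u(t,0)$ must eventually fail because $u(t,0)\to\theta$; but your proposed repairs do not work. Shifting the foot to $x=\xi_\infty$ and comparing there requires $\theta(t)\le\xi_\infty$, which you yourself note is hopeless for $1<p\le3$ since $l(b(t))\to\infty$. The time-window argument is circular: to start the comparison at time $T_0$ you still need $\theta+\xi_\infty\le u(s,0)$ for all $s\ge T_0$, and this fails for any fixed $\xi_\infty>0$ once $T_0$ is large. If you let $\xi_\infty=\xi_\infty(T_0)\to0$, then $\beta_0\to\xi_0$ and the gain $2(\beta_0-\xi_0)\sqrt t$ evaporates. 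In fact your asserted conclusion $h(t)-2\xi_0\sqrt t\to+\infty$ is \emph{false} for $p=1$, since the paper proves the matching upper bound $h(t)\le2\xi_0\sqrt t+O(1)$ in that case; this alone shows the argument cannot be right.

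The paper's fix is to take $\xi_\infty=0$. Set
\[
\underline{u}(t,x)=\theta\Bigl[1-\tfrac{E(x/2\sqrt t)}{E(\xi_0)}\Bigr],\qquad \underline{h}(t)=2\xi_0\sqrt t,
\]
the exact Stefan solution \eqref{SolofStefan}. Then $\underline u\le\theta$ forces $f(\underline u)=0$, so $\underline u_t-\underline u_{xx}-f(\underline u)=0$; the Stefan condition gives $\underline h'(t)=-\mu\underline u_x(t,\underline h(t))$; and the boundary inequality at $x=0$ is simply $\underline u(t,0)=\theta<u(t-\varepsilon,0)$, valid for \emph{all} $t$ because $u(\cdot,0)>\theta$ always. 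One chooses $\varepsilon>0$ small so that $\underline h(\varepsilon)<h_0$ and $\underline u(\varepsilon,\cdot)\le\theta<u(0,\cdot)$ on $[0,\underline h(\varepsilon)]$, applies Lemma \ref{comp1}, and reads off $h(t)\ge2\xi_0\sqrt{t+\varepsilon}=2\xi_0\sqrt t+O(1)$. No windows, no shifts, no dependence on $p$.
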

\begin{proof}

We construct a lower solution in the following manner:
\begin{align*}
\underline{u}(t, x)=\theta\left[1-\frac{E\left(\frac{x}{2\sqrt{t}}\right)}{E(\xi_0)}\right],\ \ \underline{h}(t)=2\xi_0\sqrt{t}.
\end{align*}
Then we have $\underline{u}(t, x)\le \theta$ and $f(\underline{u})\ge 0$.

We will show that for sufficiently small $\varepsilon>0$ $(\underline{u}, \xi, \underline{h})$ is a lower solution to \eqref{problem} for $t>\varepsilon$, that is,
\begin{align}
&\underline{u}_t-\underline{u}_{xx}-f(\underline{u})\le 0,\ \ t>\varepsilon,\ 0<x<\underline{h}(t), \label{01} \\
&\underline{u}(t, \underline{h}(t))=0,\ \ t>\varepsilon, \label{02}\\
&\underline{h}(\varepsilon)\le h_0,\ \underline{u}(\varepsilon, x)\le u(0, x),\  0\le x\le \underline{h}(\varepsilon),\label{03}\\
&\underline{u}(t, 0)\le u(t-\varepsilon, 0),\ \ t>\varepsilon, \label{04}\\
&\underline{h}'(t)\le -\mu \underline{u}_x(t, \underline{h}(t)),\ \ t>\varepsilon. \label{05}
\end{align}
From \eqref{Stefan0} and the fact that $f(\underline{u})\ge 0$, \eqref{01} follows.

By the definition, it is easily seen that \eqref{02} holds.

Next we check \eqref{03}. As $u(t, 0)>\theta$ for $t\ge 0$, clearly
\begin{align*}
\underline{u}(\varepsilon, x)\le \underline{u}(\varepsilon, 0)\le \theta<u(0, 0)
\end{align*}
for any sufficiently small $\varepsilon>0$ and $0\le x\le \underline{h}(\varepsilon)$.

By the continuity of $u(0, x)$, we can find a small $\delta>0$ such that
\begin{align*}
u(0, x)>\theta\ \ {\rm for}\ x\in[0, \delta).
\end{align*}
Taking $\varepsilon>0$ to be sufficiently small so that
\begin{align*}
\varepsilon<\frac{1}{4\xi_0^2}\min\{\delta^{2}, h_{0}^{2}\},\ \ {\rm that\ is},\ \ 2\xi_0\sqrt{\varepsilon}<\min\{\delta, h_{0}\},
\end{align*}
we have
\begin{align*}
\underline{h}(\varepsilon)=2\xi_0\sqrt{\varepsilon}<h_{0}
\end{align*}
and
\begin{align*}
\underline{u}(\varepsilon, x)\le \theta<u(0, x)
\end{align*}
for $x\in [0, \underline{h}(\varepsilon)]=[0, 2\xi_0\sqrt{\varepsilon}]$.

In view of $\underline{u}(t,0)\le \theta<u(t-\varepsilon, 0)$ for $t\ge\varepsilon$, it is also clear that \eqref{04} holds.

Finally \eqref{Stefan0} ensures that \eqref{05} is satisfied.

Hence, Lemma \ref{comp1} yields that
\begin{align*}
\underline{h}(t+\varepsilon)=2\xi_0\sqrt{t+\varepsilon}\le h(t)\ \ {\rm for}\ t>0.
\end{align*}
and in turn,
\begin{align*}
h(t)\ge 2\xi_0\sqrt{t}+O(1),\ \ \mbox{as}\ t\to\infty.
\end{align*}

\end{proof}

Combining Lemmas \ref{l3.4}--\ref{l3.8}, we obtain Theorem \ref{theoremA}.

\vskip20pt


\begin{thebibliography}{99}
%\bibitem{A} S. B. Angenent, {\it The zero set of a solution of a parabolic equation}, J. Reine Angew. Math., 390 (1988), 79--96.
%\bibitem{AW1} D. G. Aronson and H. F. Weinberger, {\it Nonlinear diffusion in population genetics, combustion, and nerve pulse propagation}, in Partial Differential Equations and Related Topics, Lecture Notes in Math. 446, Springer, Berlin, 1975, 5--49.
%\bibitem{AW2} D. G. Aronson and H. F. Weinberger, {\it Multidimensional nonlinear diffusion arising in population genetics}, Adv. in Math., 30 (1978), 33--76.
%\bibitem{CLZ} J. Cai, B. Lou and M. Zhou, {\it Asymptotic behavior of solutions of a reaction diffusion equation with free boundary conditions}, J. Dynam. Differential Equations, 26(2014), 1007--1028.
\bibitem{DG} Y. Du and Z. Guo, {\it Spreading-vanishing dichotomy in a diffusive logistic model with a free boundary, II}, J. Differential Equations, 250 (2011), 4336--4366.
\bibitem{DLI} Y. Du and Z. Lin, {\it Spreading-vanishing dichotomy in the diffusive logistic model with a free boundary}, SIAM J. Math. Anal., 42 (2010), 377--405.
\bibitem{DDL} W. Ding, Y. Du and X. Liang, {\it Spreading in space-time periodic media governed by a monostable equation with free boundaries, Part 1: Continuous initial functions}, J. Differential Equations, 262(2017), 4988-5021.
\bibitem{DL} Y. Du and B. Lou, {\it Spreading and vanishing in nonlinear diffusion problems with free boundaries}, J. Eur. Math. Soc., 17 (2015) 2673--2724.
\bibitem{DLZ} Y. Du, B. Lou and M. Zhou, {\it Nonlinear diffusion problems with free boundaries: Convergence, transition speed and zero number arguments}, SIAM J. Math. Anal., 47(2015), 3555-3584.
\bibitem{DM} Y. Du and H. Matano, {\it Convergence and sharp thresholds for propagation in nonlinear diffusion problems}, J. European Math. Soc., 12(2010), 279-312.
\bibitem{DMZ} Y. Du, H. Matsuzawa and M. Zhou, {\it Sharp estimate of the spreading speed determined by nonlinear free boundary problems}, SIAM J. Math. Anal., 46 (2014), 375--396.
\bibitem{DMZ2} Y. Du, H. Matsuzawa and M. Zhou, {\it Spreading speed and profile for nonlinear Stefan problems in high space dimensions}, J. Math. Pures Appl., 103 (2015), 741-787.
%\bibitem{DWZ} Y. Du, L. Wei and L. Zhou, {\it Spreading in a shifting environment modeled by the diffusive logistic equation with a free boundary}, preprint.
%\bibitem{F} F. J. Fernandez, {\it Unique continuation for parabolic operators. II}, Comm. Partial Differential Equations 28 (2003), 1597--1604.
%\bibitem{GLL1} H. Gu, Z. Lin and B. Lou, {\it Different asymptotic spreading speeds induced by advection in a diffusion problem with free boundaries}, Proc. Amer. Math. Soc., 143(2015), 1109--1117.
%\bibitem{GLL2} H. Gu, Z. Lin and B. Lou, {\it Long time behavior of solutions of a diffusion-advection logistic model with free boundaries},  Appl. Math. Lett., 37(2014), 49--53.
%\bibitem{GLZ} H. Gu, B. Lou and M. Zhou, {\it Long time behavior of solutions of Fisher-KPP equation with advection and free boundaries}, J. Funct. Anal., 269 (2015) 1714--1768.
\bibitem{HW} A. Haraux and F. B. Weissler, {\it Non-uniqueness for a semilinear initial value problem}, Indiana Univ. Math. J., 31 (1982), 167--189.
%\bibitem{KOY} Y. Kaneko, K. Oeda and Y. Yamada, {\it Remarks on spreading and vanishing for free boundary problems of some reaction-diffusion equations}, Funkcial. Ekvac., 57 (2014), 449--465.
%\bibitem{KY} Y. Kaneko and Y. Yamada, {\it A free boundary problem for a reaction-diffusion equation appearing in ecology}, Adv. Math. Sci. Appl., 21 (2011), 467--492.
%\bibitem{KM} Y. Kaneko and H. Matsuzawa, {\it Spreading speed and sharp asymptotic profiles of solutions in free boundary problems for nonlinear advection-diffusion equations}, J. Math. Anal. Appl., 428 (2015), 43--76.
%\bibitem{LSU} O. A. Ladyzenskaja, V. A. Solonnikov and N. N. Ural' ceva, {\it Linear and Quasilinear Equations of Parabolic Type}, Amer. Math. Soc., Providence, RI, 1968.
%\bibitem{Li} G. M. Lieberman, {\it Second Order Parabolic Differential Equations}, World Scientific, Singapore, 1996.
%\bibitem{S} D. H. Sattinger, {\it Monotone methods in nonlinear elliptic and parabolic boundary value problem}, Indiana Univ. Math. J. 21 (1972), 979--1000.
\end{thebibliography}
\end{document}